\begin{document}



	\newtheorem{theorem}{Theorem}[section]
	\newtheorem{proposition}[theorem]{Proposition}
	\newtheorem{corollary}[theorem]{Corollary}
	\newtheorem{lemma}[theorem]{Lemma}
	\newtheorem{definition}[theorem]{Definition}	
	\newtheorem{assum}{Assumption}[section]
	\newtheorem{example}[theorem]{Example}
	\newtheorem{remark}[theorem]{Remark}
	\newtheorem*{conjecture}{Conjecture}

\newcommand{\R}{\mathbb{R}}
\newcommand{\Rn}{{\mathbb{R}^n}}
\newcommand{\T}{\mathbb{T}}
\newcommand{\s}{s_{*}}
\newcommand{\sn}{s_{n,*}}
\newcommand{\Tm}{{\mathbb{T}^m}}
\newcommand{\TmRn}{{\mathbb{T}^m\times\mathbb{R}^n}}
\newcommand{\Z}{\mathbb{Z}}
\newcommand{\Zm}{{\mathbb{Z}^m}}
\newcommand{\C}{\mathbb{C}}
\newcommand{\Q}{\mathbb{Q}}
\newcommand{\N}{\mathbb{N}}

	\newcommand{\supp}{{\rm supp}{\hspace{.05cm}}}
	\newcommand {\lb}{{\langle}}
	\newcommand {\rb}{\rangle}
 	\numberwithin{equation}{section}

\title[$L^p\to L^q$ estimates for    Stein's   spherical  maximal    operators]{  $L^p\to L^q$ estimates for     Stein's    spherical  maximal    operators}

 \author[ N. Liu,  M. Shen, \ L. Song  and L.  Yan]{ Naijia Liu, \ Minxing Shen, \ Liang Song
	and \ Lixin Yan}

\address{Naijia Liu,
	Department of Mathematics,
	Sun Yat-sen University,
	Guangzhou, 510275,
	P.R.~China}
\email{liunj@mail2.sysu.edu.cn}	

\address{Minxing Shen, Chern Institute of Mathematics, Nankai University, Tianjin, 300071, P.R. China}
\email{9820240068@nankai.edu.cn}	

\address{Liang Song,
	Department of Mathematics,
	Sun Yat-sen University,
	Guangzhou, 510275,
	P.R.~China}
\email{songl@mail.sysu.edu.cn}

\address{
	Lixin Yan, Department of Mathematics, Sun Yat-sen  University, Guangzhou, 510275, P.R. China}
\email{mcsylx@mail.sysu.edu.cn}

\date{\today}
\subjclass[2020]{42B25, 42B20, 35L05}

\keywords{spherical  maximal   operators, $L^p$-improving estimates,  wave equation,  local smoothing estimates}

\begin{abstract} 
	In this article we	consider  a  modification     of the Stein's spherical    maximal  operator
		of  complex  order $\alpha$ on
		${\mathbb R^n}$:  
	$$
		{\mathfrak M}^\alpha_{[1,2]} f(x) =\sup\limits_{t\in [1,2]} \big|	{1\over   \Gamma(\alpha)  }  \int_{|y|\leq 1} \left(1-|y|^2 \right)^{\alpha -1} f(x-ty) dy\big|.
	$$
We show that when $n\geq 2$,  suppose
		$\|	{\mathfrak M}^{\alpha}_{[1,2]}  f \|_{L^q({\mathbb R^n})} \leq C\|f \|_{L^p({\mathbb R^n})}$
	holds for some $\alpha\in \C$, $p,q\geq1$,  then  we must  have that $q\geq p$ and
$${\rm Re}\,\alpha\geq  \sigma_n(p,q):=\max\left\{\frac{1}{p}-\frac{n}{q},\ \frac{n+1}{2p}-\frac{n-1}{2}\left(\frac{1}{q}+1\right),\frac{n}{p}-n+1\right\}.$$  Conversely,  we show that  ${\mathfrak M}^\alpha_{[1,2]}$  is bounded from $L^p({\mathbb R^n})$ to $L^q({\mathbb R^n})$ provided that 
   $q\geq p$ and ${\rm Re}\,\alpha>\sigma_2(p,q)$ for $n=2$; 
  and ${\rm Re}\,\alpha>\max\left\{\sigma_n(p,q), \
1/(2p)- (n-2)/(2q) -(n-1)/4\right\}$ 
 for $n>2$. The range of $\alpha,p$ and $q$  is almost optimal in the case either $n=2$, or  $\alpha=0$, or $(p,q)$ lies in some  regions for $n>2$.
	\end{abstract}

	\maketitle

\section{Introduction}
\setcounter{equation}{0}

 In 1976 Stein \cite{St}  introduced
 the spherical  maximal  means ${\frak M}^\alpha f(x)= \sup_{t>0}
 |{\frak M}^\alpha_tf (x) |$  of (complex) order $\alpha$ on ${\mathbb R}^n$, where
 \begin{eqnarray}\label{e1.1}
 	{\frak M}^\alpha_t f (x) =
 	{1\over   \Gamma(\alpha)  }  \int_{|y|\leq 1} \left(1-{|y|^2 }\right)^{\alpha -1} f(x-ty)\,\text{d}y.
 \end{eqnarray}
These means are   initially defined only for ${\rm Re}\  \alpha>0$, but the definition can be extended to   all complex $\alpha$  by analytic continuation. In the  case $\alpha=1$,  ${\frak M}^\alpha$ corresponds to the Hardy-Littlewood maximal operator and in the case  $\alpha=0$,  ${\frak M}^\alpha$  corresponds to   the spherical maximal operator  ${\frak M}  f(x):= \sup_{t>0}
|{\frak M}_tf (x) |$ in which
\begin{eqnarray}\label{e1.2}
	{\frak M}_tf (x) = c_n \int_{{\mathbb S}^{n-1}}   f(x-ty) \,\text{d}\sigma(y), \ \ \  x  \in {\mathbb R^n},
\end{eqnarray}
where  ${\mathbb S}^{n-1}$ denotes the standard unit sphere in ${\mathbb R^n}$.
In \cite{St}   Stein   obtained the  inequality
\begin{eqnarray}\label{e1.3}
	\|	{\frak M}^{\alpha}  f \|_{L^p({\mathbb R^n})} \leq C\|f \|_{L^p({\mathbb R^n})}
\end{eqnarray}
for $
	{\rm Re}\, \alpha>1-n+ {n/p} $ when $   1<p\leq 2$;
or
$ {\rm Re}\, \alpha>{(2-n)/p}  $ when $ 2\leq p\leq \infty.
$
From it, we see  that when $\alpha=0$ and $n\geq 3$,  the  maximal operator ${\frak M} $ is bounded on $L^p({\mathbb R^n})$ for the   range $ p>n/(n-1)$. This range of $p$ is sharp,
as has been pointed out in \cite{St, SWa},  no such result can hold for $p\leq n/(n-1)$
if $n\geq 2$. The extension of this result in \cite{St} to the case $n=2$ was established about a decade later by    Bourgain \cite{Bo}, see also the account in \cite[Chapter XI]{St1}.

In addition to Stein and Bourgain, other authors have studied the spherical  maximal  means; for instance see  \cite{Lee, LSSY, MSS, MYZ, NRS, Sc1, Sc, SS, S1, S2, Z}   and the references therein. All these refinements can be stated altogether as follows: When $n\geq 2$,  suppose \eqref{e1.3}
holds for some $\alpha$ and $p\geq 2$,  then  we must  have that
$
{\rm Re}\,\alpha \geq   \max \{1/p-(n-1)/2,\ -(n-1)/p \}.
$
Further,  estimate  \eqref{e1.3} holds
whenever    $p\geq 2$  and
\begin{eqnarray}\label{e1.7}
	{\rm Re} \, \alpha >
	\left\{
	\begin{array}{ll}
	 \max\left\{{ 1\over p} -{1\over 2}, \,  {1-n\over p} \right\}, \ \ \ &n=2;\\[4pt]
		 \max\left\{{ 1-n\over 4} +{3-n\over 2p}, \,  {1-n\over p} \right\}, \ \ \ &n\geq 3.
	\end{array}\right.
\end{eqnarray}

\smallskip

\subsection{Main results}
	In this article we	 modify the definition of the Stein's spherical    maximal  operator ${\frak M}^\alpha  $ so that the supremum
is taken over, say, $1\leq  t\leq 2$, i.e., 
$$
{\frak M}^\alpha_{[1,2]} f(x):=\sup\limits_{t\in [1,2]} \big|{\frak M}^\alpha_t f (x)\big|,
$$
 then the resulting   maximal function is
actually bounded from $L^p(\Rn)$ to $L^q(\Rn)$ for some $q>p$. 
More precisely, we have  the following results.

\begin{theorem}\label{th1.1}
Let  $n\geq 2$ and $p,q\in [1,\infty]$. Suppose
\begin{align}\label{e1.8}
\big\|{\frak M}^{\alpha}_{[1,2]}f\big\|_{L^{q}(\Rn)}\leq C\|f\|_{L^{p}(\Rn)}
\end{align}
holds  for some $\alpha\in\mathbb{C}$. Then  we must have $q\geq p$ and
		$$
		{\rm Re}\,\alpha\geq  \sigma_n(p,q):=\max\left\{\frac{1}{p}-\frac{n}{q},\ \frac{n+1}{2p}-\frac{n-1}{2}\left(\frac{1}{q}+1\right),\frac{n}{p}-n+1\right\}.
		$$
\end{theorem}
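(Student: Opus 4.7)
My plan is to establish each of the four necessary conditions by testing the putative inequality \eqref{e1.8} against a specific extremal function and letting a small parameter tend to zero. All four arguments share the same structure: for each test $f$ one estimates $\|f\|_{L^p(\Rn)}$ and produces a pointwise lower bound for $\mathfrak{M}^\alpha_{[1,2]}f$ on a suitable set, and then reads off the constraint on $(\text{Re}\,\alpha,p,q)$ by comparing exponents as the parameter goes to $0$.

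The inequality $q \ge p$ comes from the translation invariance of $\mathfrak{M}^\alpha_{[1,2]}$: for a fixed nonzero $g \in C_c^\infty(\Rn)$ and $v \in \Rn$ of sufficiently large norm, the translates $\tau_{kv}g$ and their maximal images have pairwise disjoint supports, so $f_N = \sum_{k=1}^N \tau_{kv}g$ satisfies $\|\mathfrak{M}^\alpha_{[1,2]}f_N\|_q = N^{1/q}\|\mathfrak{M}^\alpha_{[1,2]}g\|_q$ and $\|f_N\|_p = N^{1/p}\|g\|_p$, forcing $1/q \le 1/p$ as $N\to\infty$.

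Two of the three lower bounds on $\text{Re}\,\alpha$ come from concentration tests. For $\text{Re}\,\alpha \ge n/p-n+1$ I would take $f = \chi_{B(0,\delta)}$: for $|x|\in[1,2]$ and $t=|x|$, the change of variables $y = x/t + (\delta/t)u$ shows that the integration localises to a half-ball of radius $\delta/t$ centred on $\{|y|=1\}$ on which $1-|y|^2 \lesssim \delta/t$, giving $\mathfrak{M}^\alpha_t f(x) \gtrsim \delta^{n-1+\text{Re}\,\alpha}$ on the annulus $\{1\le |x|\le 2\}$ of measure $\sim 1$; the bound then follows by comparison with $\|f\|_p \sim \delta^{n/p}$. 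For the Knapp-type bound $\text{Re}\,\alpha \ge (n+1)/(2p) - (n-1)(1/q+1)/2$ I would use the slab $f = \chi_R$ with $R = \{|y'|\le\delta^{1/2},\,|y_n|\le\delta\}$; for $x = (x',-t)$ with $|x'|\le\delta^{1/2}$ and $t\in[1,2]$, the sphere $\{|y-x|=t\}$ is tangent to $\{y_n=0\}$ near $(x',0)$ with paraboloid approximation $y_n\approx |y'|^2/(2t)$ lying inside $R$, yielding $\mathfrak{M}^\alpha_t f(x) \gtrsim \delta^{(n-1)/2+\text{Re}\,\alpha}$ on a set of $x$ of measure $\sim\delta^{(n-1)/2}$, which together with $\|f\|_p \sim \delta^{(n+1)/(2p)}$ gives the bound.

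The last bound $\text{Re}\,\alpha \ge 1/p - n/q$, relevant precisely when $1/p > n/q$, is the delicate one and the main obstacle; I would use the sphere-singular test $f(y) = (1-|y|^2)^{-a}\chi_{|y|\le 1}$ with $0<a<1/p$ so that $f \in L^p$. For $|x|=\epsilon$ small the two unit spheres $\{|y|=1\}$ and $\{|x-y|=1\}$ meet along the $(n-2)$-sphere $M_x$ at a transverse angle $\theta$ satisfying $\cos\theta = 1-|x|^2/2$, i.e.\ $\theta\sim\epsilon$; a local analysis in the 2D transverse to $M_x$ shows that the lens is a wedge of apex angle $\sim\epsilon$ at each point of $M_x$, on which the integral of $(2s_1)^{\alpha-1}(2s_2)^{-a}$ (with $s_1,s_2$ the distances to the two spheres) scales as $\epsilon^{\text{Re}\,\alpha-a}$, and integrating along $M_x$ gives
\[
\mathfrak{M}^\alpha_1 f(x) \gtrsim |x|^{\text{Re}\,\alpha-a}\qquad\text{for }|x|\text{ small},
\]
provided $0<\text{Re}\,\alpha<a<1$. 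Consequently $\|\mathfrak{M}^\alpha_{[1,2]}f\|_q = \infty$ whenever $q(\text{Re}\,\alpha-a)+n\le 0$, and letting $a\uparrow 1/p$ produces the desired constraint by contradiction. The subtlety lies in correctly identifying the $\epsilon$-dependent wedge geometry near $M_x$ and verifying that the product of the two singular factors there scales exactly as $|x|^{\text{Re}\,\alpha-a}$.
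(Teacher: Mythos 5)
Your approach is genuinely different from the paper's. The paper never works with the physical-space kernel $(1-|y|^2)^{\alpha-1}$ at all: it uses the Fourier multiplier $\widehat{m_\alpha}(\xi)=\pi^{-\alpha+1}|\xi|^{-n/2-\alpha+1}J_{n/2+\alpha-1}(2\pi|\xi|)$ together with the complete asymptotic expansion of the Bessel function to split $\mathfrak{M}^\alpha_t$ into a trivial low-frequency piece plus operators of the form $e^{\pm 2\pi it\sqrt{-\Delta}}a_\pm(t|D|)$, and then it tests these wave operators against three frequency-localized functions: a ``focusing'' datum $\hat f_j(\xi)=e^{-2\pi i|\xi|}\phi(2^{-j}|\xi|)$, a Knapp block, and a conically-localized $\hat f_j(\xi)=\phi(2^{-j}|\xi|)\chi(\xi)$. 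Your three physical-space test functions (small ball at the sphere, tangent slab, sphere-singular profile) are the spatial duals of those, and the exponent bookkeeping in each case comes out the same. The upshot is that your method is more elementary and geometrically transparent where it applies, while the paper's is technically heavier but structurally uniform.

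The gap is that your computations only make sense for $\operatorname{Re}\alpha>0$. Each of your three lower bounds evaluates the literal integral $\Gamma(\alpha)^{-1}\int_{|y|\le1}(1-|y|^2)^{\alpha-1}f(x-ty)\,dy$, but $\mathfrak M^\alpha_t$ is defined for $\operatorname{Re}\alpha\le0$ only by analytic continuation, and $m_\alpha$ is then a genuine distribution (e.g.\ $m_0=\mathrm{d}\sigma_{\mathbb S^{n-1}}$); pairing it with the non-smooth indicators $\chi_{B(0,\delta)}$, $\chi_R$, $(1-|y|^2)^{-a}\chi_{|y|\le1}$ is not literally justified, and the asymptotics in $\delta$ (or $\epsilon$) would require tracking the continuation term by term, which is not a formality. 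This matters for the theorem: whenever $\sigma_n(p,q)>0$ (e.g.\ $p$ near $1$, where $n/p-n+1$ is close to $1$), the statement asserts that \emph{no} $\alpha$ with $\operatorname{Re}\alpha\le0$ can satisfy \eqref{e1.8}, and your argument, as written, is silent on that range. A secondary point, easier to repair, is that $(1-|y|^2)^{\alpha-1}$ is oscillatory when $\operatorname{Im}\alpha\ne0$, so the lower bounds on $|\mathfrak M^\alpha_t f(x)|$ cannot be obtained by dropping to absolute values; one has to compute the transverse integral exactly (it is a Beta function in each of your three tests) and check the resulting $\alpha$-dependent constant does not vanish — this does work for $\operatorname{Re}\alpha>0$ but should be said. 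The paper sidesteps both issues at once because its test functions live on the Fourier side, where $\widehat{m_\alpha}$ and the coefficient functions $a_1,a_2$ in the Bessel expansion are perfectly well defined for every $\alpha\in\mathbb C$ and bounded below in modulus by \eqref{e2.7a}.
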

	
\begin{theorem}	\label{th1.2}
Assume $q\geq p$.
		
 {\rm (i)} \   Let  $n=2$. If  \ ${\rm Re}\,\alpha>\sigma_2(p,q)$,		then   the estimate \eqref{e1.8} holds.

{\rm (ii)} \  Let $n>2$. If	
$${\rm Re}\,\alpha>d_n(p,q):=\max\left\{\sigma_n(p,q), \
\frac{1}{2p}-\frac{n-2}{2q}-\frac{n-1}{4}\right\},
$$
then   the estimate \eqref{e1.8} holds.
\end{theorem}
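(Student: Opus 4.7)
The plan is to reduce the $L^p\to L^q$ estimate for the maximal operator to a space--time (local smoothing) estimate for the half-wave propagator $e^{\pm it\sqrt{-\Delta}}$, and then to invoke the sharp (or best available) local smoothing theorem. First I would perform a Littlewood--Paley decomposition $f=\sum_{k\ge 0}f_k$ with $\widehat{f_k}$ supported in $\{|\xi|\sim 2^k\}$. Using the standard Bessel-function asymptotics for the Fourier multiplier of ${\frak M}^\alpha_t$, one writes, modulo rapidly decaying errors and a trivial low-frequency term,
\[
{\frak M}^\alpha_t f_k(x)\ \approx\ 2^{-k((n-1)/2+\alpha)}\bigl(T_t^+f_k(x)+T_t^-f_k(x)\bigr),
\]
with $T_t^{\pm}=a^\pm(t,D)e^{\pm it\sqrt{-\Delta}}$, where $a^\pm(t,D)$ are bounded zeroth-order symbol operators uniformly in $t\in[1,2]$. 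The $\alpha$-dependence is thus collapsed into the single scalar factor $2^{-k((n-1)/2+\alpha)}$ that governs the dyadic summation.

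Next I would remove the supremum in $t$ via the elementary identity
\[
\sup_{t\in[1,2]}|F(t)|^q\ \leq\ |F(1)|^q+q\int_1^2|F(t)|^{q-1}|F'(t)|\,dt,
\]
applied pointwise in $x$ and integrated, followed by H\"older. This reduces the maximal $L^q$-bound at dyadic level $k$ to mixed-norm estimates $\|T_t^\pm f_k\|_{L^q([1,2]\times\Rn)}$ and $\|\partial_tT_t^\pm f_k\|_{L^q([1,2]\times\Rn)}\sim 2^k\|T_t^\pm f_k\|_{L^q([1,2]\times\Rn)}$, where the extra $2^k$ comes from $\partial_t e^{\pm it\sqrt{-\Delta}}=\pm i\sqrt{-\Delta}\,e^{\pm it\sqrt{-\Delta}}$. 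Here I would plug in, for $n=2$, the sharp Guth--Wang--Zhang local smoothing for the wave equation, which yields precisely the critical line captured by the term $\tfrac{n+1}{2p}-\tfrac{n-1}{2}(\tfrac{1}{q}+1)$ in $\sigma_2(p,q)$; and for $n\ge 3$ a Mockenhaupt--Seeger--Sogge--type local smoothing \cite{MSS,Lee}, which yields the extra threshold $\tfrac{1}{2p}-\tfrac{n-2}{2q}-\tfrac{n-1}{4}$ that appears in $d_n(p,q)$. Multiplying by $2^{-k((n-1)/2+\mathrm{Re}\,\alpha)}$ and summing geometrically in $k$ converges exactly on the corresponding critical line.

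The remaining two entries of $\sigma_n(p,q)$ are of a different nature. The term $\tfrac{1}{p}-\tfrac{n}{q}$ follows from a direct Young-type bound using the support in $\{|y|\le t\}$ and a controllable $L^{q'}$-density of $d\mu^\alpha_t$ for $\mathrm{Re}\,\alpha>0$; the term $\tfrac{n}{p}-n+1$ corresponds to a focal-concentration endpoint and is obtained by Stein's analytic interpolation in $\alpha$ between a crude $L^\infty$ bound at large real $\alpha$ and the kernel estimate at $\mathrm{Re}\,\alpha=1-n+n/p$, in the spirit of \cite{St}. The three regimes are then glued by complex interpolation in $\alpha$ to produce the full open range asserted in (i) and (ii). The main obstacle will be the sharp planar endpoint in (i): dyadic summation on the critical line must converge uniformly in $t\in[1,2]$ without logarithmic loss, which relies on the full strength of the Guth--Wang--Zhang theorem together with a careful variant of the Sobolev trick above. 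For $n\ge 3$ the analogous step is technically easier, but the absence of a sharp local smoothing is precisely what creates the gap between $d_n(p,q)$ and the sharp threshold $\sigma_n(p,q)$.
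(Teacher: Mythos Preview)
Your core reduction---Littlewood--Paley decomposition, Bessel asymptotics to pass to $e^{\pm it\sqrt{-\Delta}}$, and the Sobolev-in-$t$ trick to trade the supremum for a space--time $L^q$ norm at the cost of a factor $2^{k/q}$---is exactly the engine the paper uses (Proposition~\ref{prop4.5} together with Lemma~\ref{lem4.5}). Where you diverge is in how the full open range of $\alpha$ is obtained. You propose to treat the three pieces of $\sigma_n(p,q)$ by three distinct mechanisms (local smoothing for one, a Young-type bound for another, Stein's analytic interpolation for the third) and then glue by complex interpolation in $\alpha$. The paper instead proves a \emph{single} $L^p\to L^q$ local smoothing estimate for $e^{it\sqrt{-\Delta}}$ (Theorems~\ref{thm4.3} and~\ref{thm4.4}) by interpolating among: the diagonal local smoothing (Guth--Wang--Zhang at $p=q=4$ for $n=2$; Bourgain--Demeter at $p=q=\tfrac{2(n+1)}{n-1}$ for $n\ge3$), the Seeger--Sogge--Stein fixed-time bounds at $p=q=1$ and $p=q=\infty$, the $(1,\infty)$ dispersive estimate, and---crucially for $n\ge 3$---the off-diagonal $(p_0,q_0)$ estimate of Cho--Lee--Li (Lemma~\ref{lemma4.2}). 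All three (resp.\ four) thresholds in $\sigma_n$ (resp.\ $d_n$) then drop out at once from the identity $\mathrm{Re}\,\alpha>s_n(p,q)-\tfrac{n-1}{2}+\tfrac{1}{q}$, with $s_n(p,q)$ the piecewise-linear smoothing exponent; no interpolation in $\alpha$ is needed.

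Your separate arguments, as sketched, have real gaps. A plain Young bound on the kernel $m_\alpha$ gives only $\mathrm{Re}\,\alpha>\tfrac{1}{p}-\tfrac{1}{q}$ (since $m_\alpha\in L^r$ iff $\mathrm{Re}\,\alpha>1-\tfrac{1}{r}$), which is strictly weaker than the required $\tfrac{1}{p}-\tfrac{n}{q}$ for $n\ge 2$, and in any case does not control the supremum in $t$. Stein's classical analytic interpolation produces $L^p\to L^p$ bounds, not $L^p\to L^q$, so it does not directly deliver the threshold $\tfrac{n}{p}-n+1$ off the diagonal. For $n\ge 3$ you invoke only ``Mockenhaupt--Seeger--Sogge--type'' smoothing, but the threshold $\tfrac{1}{2p}-\tfrac{n-2}{2q}-\tfrac{n-1}{4}$ appearing in $d_n$ genuinely requires the Cho--Lee--Li off-diagonal input in addition to Bourgain--Demeter; interpolation using only the diagonal estimates and the $(1,\infty)$ bound gives a strictly larger exponent in the region $\triangle ABC$ of Figure~1. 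Finally, your worry about summing on the ``critical line'' without logarithmic loss is misplaced: the theorem asserts only the open range $\mathrm{Re}\,\alpha>\sigma_2(p,q)$, so the dyadic sum is geometric with $\varepsilon$ room to spare.
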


From  Theorem \ref{th1.1} and  (i) of Theorem \ref{th1.2},  we see  that when  $n=2$, the range of $\alpha$ is  sharp  except for the boundary case  ${\rm Re}\ \alpha=\sigma_2(p,q)$ when $q\geq p$.

Note  that if $({1}/{p},{1}/{q})$ belongs to the set $\triangle ODE\backslash \triangle ABC$ (see Figure 1 below), then $\sigma_n(p,q)\geq
\frac{1}{2p}-\frac{n-2}{2q}-\frac{n-1}{4}$, and so $d_n(p,q)=\sigma_n(p,q)$.
\begin{figure}[h]
	\begin{center}
		\begin{tikzpicture}[scale=3.5]
			\draw[->] (0,0) -- (1.2,0) node[right] {\(\frac1p\)};
			\draw[->] (0,0) -- (0,1.05) node[above] {\(\frac1q\)};
			
			\draw[thick] (1,0) -- (1,1);
			\draw[thick] (0,0) -- (1,1);
			\draw[thick] (0.5,0.5) -- (0.46,0.39);
			\draw[thick] (0.46,0.39) -- (7/20,7/20);
			\draw[densely dotted] (0,1) -- (1,1);
			
			\node[below left] at (0,0) {\(O(0,0)\)};
			\node[left] at (7/20,7/20) {\(   A(\frac{n-1}{2(n+1)},\frac{n-1}{2(n+1)})\)};
			\node[below right] at (0.46,0.39) {\(B(\frac{(n-1)(n+3)}{2(n^{2}+2n-1)},\frac{(n-1)(n+1)}{2(n^{2}+2n-1)})\)};
			\node[above left] at (0.5,0.5) {\(C(\frac{1}{2},\frac{1}{2})\)};
			\node[above] at (1,1) {\(D(1,1)\)};
			\node[below] at (1,0) {\(E(1,0)\)};
			
			\fill[gray!50,opacity=0.6] (0.5,0.5) -- (0.46,0.39) --(7/20,7/20) -- cycle;
			
			\filldraw[black] (0.5,0.5) circle (0.2pt);
			\filldraw[black] (0.46,0.39) circle (0.2pt);
			\filldraw[black] (7/20,7/20) circle (0.2pt);
		\end{tikzpicture}
		\caption{  The range of $(1/p, 1/q)$  in (ii) of Theorem \ref{th1.2}.}
	\end{center}
\end{figure}
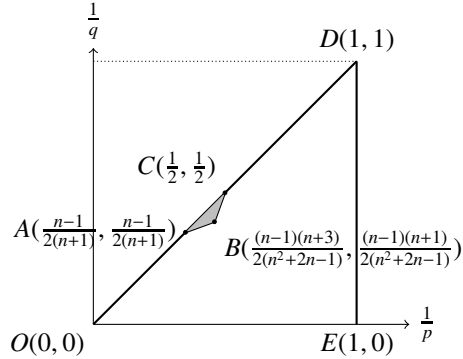

\noindent
 It then follows from  Theorem \ref{th1.1} and  (ii) of Theorem \ref{th1.2}   that if $n>2$, the range of $\alpha$ is   sharp when $({1}/{p},{1}/{q})$ belongs to the set $\triangle ODE\backslash \triangle ABC$ except for the boundary case  ${\rm Re}\ \alpha=\sigma_n(p,q)$.

When $n>2$ and $\alpha=0$, (ii) of Theorem \ref{th1.2} implies that the result of  Schlag and Sogge \cite{SS}, which is  optimal except the boundaries.
Indeed,  in the case $\alpha=0$  almost sharp results about the spherical maximal operators ${\frak M}_{[1,2]}$   have been obtained in Schlag \cite{Sc} for $n=2$,  and  Schlag and Sogge \cite{SS} for $n\geq 2$.    Recall that from  \cite{Sc,SS},   a necessary condition for $L^p(\R^{n})\to L^q(\R^{n})$ boundedness of ${\frak M}_{[1,2]}$   is  that $(1/p,1/q)$ belongs to the closed quadrangle $\mathcal{Q}$ with corners $P_1=(0,0)$, $P_2=(\frac{n-1}{n},\frac{n-1}{n})$, $P_3=(\frac{n-1}{n},\frac{1}{n})$ and $P_4=(\frac{n(n-1)}{n^2+1},\frac{n-1}{n^2+1})$ (when $n=2$, the quadrangle
${\mathcal Q}$ becomes a triangle as the points $P_2$ and $P_3$ coincide);    Further, if  $(1/p,1/q)$ belongs to the interior of $\mathcal Q$,  then ${\frak M}_{[1,2]}$ is bounded from $L^p(\R^{n})$ to $L^q(\R^{n})$.

\subsection{About our method}
Let us outline the proof of    Theorems~\ref{th1.1} and ~\ref{th1.2}.
To show Theorem~\ref{th1.1}, we use the asymptotic expansion of Fourier multiplier of the operator ${\frak M}^{\alpha}_t$ to see
  that ${\frak M}_t^\alpha $ are essentially the linear combination of half-wave operators
$e^{it\sqrt{-\Delta}}\lb D\rb^{-\frac{n-1}{2}-\alpha} $ and $e^{-it\sqrt{-\Delta}}\lb D\rb^{-\frac{n-1}{2}-\alpha}$, and hence the complexity  of the operator ${\frak M}^{\alpha}_t$
 comes from  the interference between the  operators $e^{it\sqrt{-\Delta}} $ and $e^{-it\sqrt{-\Delta}}$. To
show  the necessity of $L^p\to L^q$ boundedness of   ${\frak M}^{\alpha}_t$ in Theorem~\ref{th1.1},   we
construct three special examples  such that there is no interference between $e^{it\sqrt{-\Delta}}f$ and $e^{-it\sqrt{-\Delta}}f$.

 The proof  Theorem~\ref{th1.2} is shown   by  combining      $L^p\to L^q$ local smoothing estimates for wave operators,     and the techniques previously used
in \cite{MSS} and \cite{MYZ}. To obtain  $L^p\to L^q$ local smoothing estimates for  wave operators, we will apply $L^p\to L^p$ local smoothing estimates for wave operators and  interpolation.   We mention that local smoothing conjecture was originally formulated by Sogge \cite{S2}:
For $n\geq 2$ and $p\geq 2n/(n-1)$, one has
\begin{eqnarray*}
	\left \| u \right\|_{L^p({\mathbb R^n}\times [1,2])}  \leq C\left(\|f\|_{W^{\gamma, p}({\mathbb R^n})} + \|g\|_{W^{\gamma-1, p}({\mathbb R^n})} \right), \ \
	\ \ \ \ \
	{\rm if}\ \  \gamma>{n-1\over 2}-{n\over p},
\end{eqnarray*}
where
	\begin{eqnarray*}
		u(x,t)= \cos(t\sqrt{-\Delta}) f(x) +{\sin(t\sqrt{-\Delta}) \over \sqrt{-\Delta}} g(x).
\end{eqnarray*}
The local smoothing conjecture has been studied in numerous papers, see for instance \cite{BD, GS,  GWZ, LW, MYZ, MSS,   S1,    W} and the references therein. When $n=2$, sharp results follow by the work of Guth, Wang and Zhang \cite{GWZ}. When $n\geq 3$, the conjecture holds for all $p\geq {2(n+1)/ (n-1)}$ by the Bourgain-Demeter decoupling theorem \cite{BD} and the method of \cite{W}.
Up to now, the conjecture is still  open in the case  $2n/(n-1)\leq p<2(n+1)/(n-1)$ and $n>2$.

The paper is organized as follows. In Section 2, we give some basic results including the properties of
the Fourier multiplier associated to the    spherical   operators $	{\frak M}^\alpha_t $ by using asymptotic expansions of Bessel functions.
 The proof of  Theorem~\ref{th1.1} will be given in  Section 3 by constructing three examples to obtain the necessarity of $L^p \to L^q$ boundedness for the maximal operator $	{\frak M}^\alpha_{[1,2]}$.
The proof of Theorem~\ref{th1.2} will be  given in Section 4.

\section{Preliminary results}\label{s2}
\setcounter{equation}{0}

Recall that the  spherical   function is defined by  ${\frak M}^\alpha_t f(x)=f\ast m_{\alpha,t}(x)$
where $m_{\alpha,t}(x)= t^{-n}m_{\alpha}(t^{-1}x)$
and
$$
	m_{\alpha}(x)=\Gamma(\alpha)^{-1}\big(1-|x|^{2}\big)_{+}^{\alpha-1},
$$
where $\Gamma(\alpha)$ is the Gamma function and $(r)_+=\max\{0,r\}$ for $r\in \mathbb R$. Define the Fourier transform of $f$ by
$
\hat{f}(\xi)=\int_{\Rn} e^{2\pi ix \cdot\xi}f(x)\,\text{d}x.
$
It follows by    \cite[p.171]{SW}  that the Fourier transform of $m_\alpha$ is given by
\begin{align}\label{e2.1}
	\widehat{m_{\alpha}}(\xi)=\pi^{-\alpha+1}|\xi|^{-n/2-\alpha+1}
	J_{n/2+\alpha-1}\big(2\pi|\xi|\big).
\end{align}
Here $J_\beta$ denotes the Bessel function of order $\beta$.
For any complex number $\beta$, we can obtain the complete
 asymptotic expansion
\begin{align}\label{ebb}
	J_\beta(r)\sim  r^{-1/2}e^{ir} \sum_{j= 0}^{\infty}b_{j} r^{-j} + r^{-1/2}e^{-ir} \sum_{j= 0}^{\infty}d_j r^{-j},\, \ \ \ \ \ \  r\geq 1
\end{align}
for suitable coefficients $b_j$ and  $d_j$ with   $b_0,d_0\neq 0$.
Note that when  $\beta$ is a positive integer,  \eqref{ebb} is given in \cite[(15), p.338]{St1}. For general  $\beta$, we refer it to \cite[(1). 7.21, p.199]{Wa}.
Then there exists an error term $E(r)$ such that for any given $N\geq 1$ and  $r\geq 1$,
\begin{align}\label{e2.2}
	J_\beta(r)= r^{-1/2}e^{ir}\left(\sum_{j= 0}^{N-1}b_{j} r^{-j}+E_{N,1}(r)\right)+ r^{-1/2}e^{-ir}\left(\sum_{j= 0}^{N-1}d_j r^{-j}+ E_{N,2}(r)\right)+E(r),
\end{align}
where
$$
E_{N,1}(r) =\sum_{j= N}^{\infty}b_{j} r^{-j}, \ \ \ {\rm and} \ \ \
E_{N,2}(r) =\sum_{j= N}^{\infty}d_{j} r^{-j}
$$
satisfying
\begin{align}\label{e2.3}
	\bigg|\bigg(\frac{d}{dr}\bigg)^k E_{N,1}(r)\bigg|+\bigg|\bigg(\frac{d}{dr}\bigg)^k E_{N,2}(r)\bigg|+\bigg|\bigg(\frac{d}{dr}\bigg)^k E(r)\bigg|\leq C_{k}r^{-N-k}
\end{align}
for all $k\in \Z_+$.
We rewrite  \eqref{e2.1} as
\begin{align}\label{e2.4}
	\widehat{m_\alpha}(\xi)
	&= \varphi(|\xi|)\widehat{m_\alpha}(\xi)
	 + (1-\varphi(|\xi|))\widehat{m_\alpha}(\xi)   \notag\\
	&= \left[ \varphi(|\xi|)\widehat{m_\alpha}(\xi)+{\mathcal E}(|\xi|) \right ] \notag\\
	&+ \left[e^{2\pi i|\xi|}{\mathcal E}_{N,1}(|\xi|)+e^{-2\pi i|\xi|}{\mathcal E}_{N,2}(|\xi|) \right]\notag\notag\\
	& + |\xi|^{-(n-1)/2-\alpha}\left[e^{2\pi i|\xi|}a_1(|\xi|)+e^{-2\pi i|\xi|}a_2(|\xi|) \right],
\end{align}
where
\begin{align}\label{ee2.1}
&{\mathcal E}(r) = (2\pi)^{1/2}c(\pi, \alpha)(1-\varphi(r))r^{-(n-2)/2-\alpha}E(2\pi r),\nonumber\\
&{\mathcal E}_{N,\ell}(r) = c(\pi, \alpha) E_{N,\ell}(2\pi r)(1-\varphi(r))r^{-(n-1)/2-\alpha},\ \ \ \ell=1,2, \nonumber\\
&a_1(r) =  c(\pi, \alpha) \sum_{j= 0}^{N-1}b_j (2\pi r)^{-j}(1- \varphi(r)),\\
 &a_2(r) =   c(\pi, \alpha)\sum_{j= 0}^{N-1}d_j (2\pi r)^{-j}(1- \varphi(r))\nonumber
\end{align}
with $c(\pi, \alpha)= 2^{-1/2}\pi^{-\alpha+1/2}$.
Here  $\varphi\in C_0^{\infty}(\R)$ is   an even function, identically equals $1$ on
$B(0,M)$  and supported on $B(0,2M)$, where $M=M(N)$ is large enough
such that
\begin{align}\label{e2.7a}
\inf_{i=1,2}\inf_{|r|\geq M}|a_{i}(r)| \geq c_{low}>0
\end{align}
and there exist $\{\theta_{i}\}_{i=1,2}$ such that
\begin{align}\label{e2.8a}
\sup_{i=1,2}\sup_{|r|\geq M}|\arg a_{i}(r)-\theta_{i}|\leq 10^{-2}.
\end{align}
 Then we can split the Fourier multiplier of the operator $\frak M^\alpha_1 $ into three parts as in \eqref{e2.4} above.

\begin{lemma}\label{lem2.1}
For $q\geq p\geq1$, we have
	\begin{eqnarray}\label{e2.5} \hspace{1cm}
		\big\|\sup_{t\in[1,2]} |\widehat{m_\alpha}(tD) \varphi(t|D|))f|\big \|_{L^q(\R^n)}+ \big\|\sup_{t\in[1,2]} |{\mathcal E}(t|D|)f |\big \|_{L^q(\R^n)} \leq C\|f\|_{L^p(\Rn)}.
	\end{eqnarray}
\end{lemma}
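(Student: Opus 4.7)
The plan is straightforward. The point of the lemma is that, after $\varphi$ has cut off the low-frequency/compactly-supported region and the error tail $E$ has been separated off, both of the multipliers appearing in \eqref{e2.5} are given by convolution against Schwartz-class kernels. The $t$-supremum is then majorized by a positive convolution whose $L^p\to L^q$ mapping is handled by Young's inequality.

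The first step is to verify that the symbols
$$
m_1(\xi):=\widehat{m_\alpha}(\xi)\,\varphi(|\xi|),\qquad m_2(\xi):=\mathcal{E}(|\xi|)
$$
lie in $\mathcal{S}(\Rn)$. For $m_1$ this is immediate from \eqref{e2.1} together with the standard fact that $r^{-\nu}J_\nu(r)$ extends to an entire function of $r^2$: this makes $\widehat{m_\alpha}$ smooth on all of $\Rn$, and the cutoff $\varphi(|\xi|)$ then contributes compact support. For $m_2$, the factor $(1-\varphi(|\xi|))\,|\xi|^{-(n-2)/2-\alpha}$ is smooth on $\Rn$ and supported in $\{|\xi|\ge M\}$, while by \eqref{e2.3} every derivative of $E(2\pi|\xi|)$ decays faster than $|\xi|^{-N-k}$; since $N$ is a free parameter in the decomposition \eqref{e2.4}, I choose it as large as needed and conclude $m_2\in\mathcal{S}(\Rn)$.

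Now set $K_j:=\mathcal{F}^{-1}(m_j)\in\mathcal{S}(\Rn)$. The Fourier-dilation identity $\mathcal{F}^{-1}[m_j(t\,\cdot\,)](x)=t^{-n}K_j(t^{-1}x)$, together with the compactness of $[1,2]$ and the Schwartz decay of $K_j$, gives for every integer $M\ge 1$ the uniform pointwise envelope
$$
\sup_{t\in[1,2]}\bigl|\mathcal{F}^{-1}[m_j(t\,\cdot\,)\widehat f\,](x)\bigr|\le C_M\,\bigl(|f|\ast G_M\bigr)(x),\qquad G_M(x):=(1+|x|)^{-M}.
$$
Young's inequality then yields $\||f|\ast G_M\|_{L^q}\le\|G_M\|_{L^r}\|f\|_{L^p}$ with $1/r=1-(1/p-1/q)$; the hypothesis $q\ge p$ forces $r\in[1,\infty]$, and choosing $M>n$ ensures $G_M\in L^r(\Rn)$ for every such $r$. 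This proves \eqref{e2.5}.

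The only point requiring any real care is the Schwartz check for $\mathcal{E}$, and even there the work is essentially bookkeeping against \eqref{e2.3}. I emphasise that the delicate interference between the two half-wave parts plays no role in this lemma, as the oscillatory factors $e^{\pm 2\pi i|\xi|}$ have been isolated in the other two bracketed terms of the decomposition \eqref{e2.4}.
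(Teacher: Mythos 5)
Your proof is correct and takes essentially the same route as the paper: both arguments rest on the observation that the two multipliers are Schwartz (using smoothness of $r^{-\nu}J_\nu(r)$ at the origin for the first and the rapid decay in \eqref{e2.3} for $\mathcal{E}$), majorize the $t$-supremum by convolution against a fixed rapidly decaying kernel, and close with Young's inequality. The paper phrases the final step by factoring out $\langle D\rangle^{-\tau}$ and invoking Sobolev embedding rather than choosing the Young exponent $r$ directly, but this is the same estimate in mildly different notation.
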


\begin{proof}
 For all $\tau\geq1$ and $t\in[1,2]$, we note that
  $\varphi(t|\xi|)\widehat{m_\alpha}(t\xi)\lb \xi\rb^{\tau}$ is smooth and compactly supported and ${\mathcal E}(t|\xi|)\lb \xi\rb^{\tau}\in {\mathscr S}(\Rn)$. Then by Young's inequality and Sobolev embedding, we have
\begin{align*}
&\big\|\sup_{t\in[1,2]} |\widehat{m_\alpha}(tD) \varphi(t|D|))f|\big\|_{L^q(\R^n)}+ \big\|\sup_{t\in[1,2]} |{\mathcal E}(t|D|)f |\big \|_{L^q(\R^n)}\\
&\leq C\|\lb \cdot\rb^{-n-1}\ast(\lb D\rb^{-\tau}f)\|_{L^q(\R^n)}\leq C\|\lb D\rb^{-\tau}f\|_{L^q(\R^n)}\\
&\leq C\|f\|_{L^p(\R^n)}
\end{align*}
if we choose $\tau> n(\frac{1}{p}-\frac{1}{q})$.
\end{proof}

Define
$$
{\mathscr E}_{N}f(x,t)=\int_{\Rn} e^{2\pi i(x\cdot \xi+t|\xi|)}{\mathcal E}_{N,1}\big(t|\xi|\big) \hat{f}(\xi)\,\text{d}\xi+\int_{\Rn} e^{2\pi i(x\cdot \xi-t|\xi|)}{\mathcal E}_{N,2}\big(t|\xi|\big)  \hat{f}(\xi)\,\text{d}\xi.
$$
Then   we have the following lemma.
\begin{lemma}\label{le2.2}
	Let $q\geq p\geq1$.  There exists a constant $C>0$ such that
	\begin{eqnarray}\label{e2.6}
		\big\|\sup_{t\in [1,2]}|{\mathscr E}_{N}f(\cdot,t)| \big\|_{L^q(\Rn)}\leq C  \|f\|_{L^p(\Rn)},
	\end{eqnarray}
	when
	$$
	N>\max\left\{2\left(\frac{1}{q}-{\rm Re}\,\alpha\right), 2n\left(\frac{1}{p}-\frac{1}{q}\right)\right\}.
	$$
\end{lemma}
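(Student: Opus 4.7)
The plan is to use an elementary Sobolev-type embedding in $t$ to reduce the maximal inequality to uniform-in-$t$ $L^p \to L^q$ bounds for $\mathscr E_N f(\cdot,t)$ and $\partial_t\mathscr E_N f(\cdot,t)$, and then to deduce these via Young's convolution inequality using the rapid decay of the symbols. For the first step, for any smooth $F(x,\cdot)$ on $[1,2]$ we have $\sup_{t\in[1,2]}|F(x,t)| \leq |F(x,1)| + \int_1^2|\partial_s F(x,s)|\,ds$, whence by Minkowski
\[
\big\|\sup_{t\in[1,2]}|F(\cdot,t)|\big\|_{L^q_x} \leq \|F(\cdot,1)\|_{L^q_x} + \int_1^2\|\partial_s F(\cdot,s)\|_{L^q_x}\,ds.
\]
Applied with $F = \mathscr E_N f$, this reduces the lemma to the uniform-in-$t$ estimate $\|\mathscr E_N f(\cdot,t)\|_{L^q_x} + \|\partial_t \mathscr E_N f(\cdot,t)\|_{L^q_x} \leq C\|f\|_{L^p}$ for $t \in [1,2]$.

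For each fixed $t$, both operators are convolutions $K^\pm_t * f$ whose Fourier transforms are $e^{\pm 2\pi i t|\xi|}$ times a smooth symbol supported in $\{|\xi|\geq c\}$ for some $c>0$ and bounded, by \eqref{ee2.1} and \eqref{e2.3}, by $C(1+|\xi|)^{-\gamma}$ with $\gamma = (n-1)/2 + {\rm Re}\,\alpha + N$ (or $\gamma -1$ for $\partial_t$, due to the extra factor of $|\xi|$). Setting $1/r = 1 + 1/q - 1/p \in [0,1]$, Young's inequality $L^p * L^r \hookrightarrow L^q$ further reduces the problem to showing $\|K^\pm_t\|_{L^r(\Rn)} \leq C$ uniformly in $t\in[1,2]$. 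To prove this, I would split the kernel into $|x|\geq 5$, where iterated integration by parts in $\xi$ (valid since $|\nabla_\xi(x\cdot\xi\pm t|\xi|)| \geq 3$ there) yields arbitrary polynomial decay, and $|x|\leq 5$, where on this bounded-measure region one controls $L^r$ by interpolation between $\|K^\pm_t\|_{L^2} = \|\widehat{K^\pm_t}\|_{L^2}$ (Plancherel) and $\|K^\pm_t\|_{L^\infty} \leq \|\widehat{K^\pm_t}\|_{L^1}$ (Hausdorff--Young). The two hypotheses on $N$ are calibrated so that both endpoints work across the full range of $r$ and for both the plain and the $\partial_t$ cases: $N > 2n(1/p-1/q)$ ensures the $L^{r'}$-integrability of the symbol, while $N > 2(1/q - {\rm Re}\,\alpha)$ compensates for the one-derivative loss of $\partial_t$.

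The hardest part is a clean accounting of the derivative count to verify that the stated thresholds on $N$ suffice simultaneously for both $\mathscr E_N$ and $\partial_t\mathscr E_N$ and across both $r\leq 2$ (direct Hausdorff--Young) and $r>2$ (interpolation with the $L^\infty$ bound). Once this bookkeeping is done, the rest is routine integration by parts and Young's inequality.
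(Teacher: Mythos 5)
Your overall strategy---reduce the sup to uniform-in-$t$ bounds and then estimate the convolution kernels directly---is plausible in spirit, but as written it does not reproduce the threshold on $N$ stated in the lemma, and it diverges from the paper's proof in two places where the paper is genuinely sharper. First, the paper does not use the crude bound $\sup_{t\in[1,2]}|F(x,t)|\leq|F(x,1)|+\int_1^2|\partial_sF(x,s)|\,ds$, which costs a full $t$-derivative; it applies Sogge's Lemma 2.4.2, which gives $\big\|\sup_{t}|g|\big\|_{L^q}\lesssim\|g\|_{L^q_{x,t}}^{1-1/q}\|\partial_t g\|_{L^q_{x,t}}^{1/q}$ and therefore costs only $1/q$ of a derivative. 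Second, the paper does a dyadic frequency decomposition and then estimates each piece through the Seeger--Sogge--Stein $L^q$ bound for Fourier integral operators with phase $x\cdot\xi\pm t|\xi|$ (Theorem 2, Chapter IX of Stein), which exploits the curvature of the light cone; your Young/Plancherel/Hausdorff--Young estimate of $\|K^\pm_t\|_{L^r(|x|\leq5)}$ is blind to the geometry of the singular support of $K^\pm_t$ on $\{|x|=t\}$ and is therefore lossier.

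These two losses create a concrete gap. Take $p=q=2$ and $\mathrm{Re}\,\alpha=0$. The lemma asserts that $N>\max\{2(1/q-\mathrm{Re}\,\alpha),\,2n(1/p-1/q)\}=1$ suffices. In your scheme $1/r=1+1/q-1/p=1$, so Young requires $\|K^\pm_t\|_{L^1}<\infty$ for the $\partial_t$-kernel, whose symbol by \eqref{e2.3} and \eqref{ee2.1} decays like $(1+|\xi|)^{-(\gamma-1)}$ with $\gamma=(n-1)/2+N$. Your estimate $\|K^\pm_t\|_{L^1(|x|\le5)}\lesssim\|K^\pm_t\|_{L^2}=\|\widehat{K^\pm_t}\|_{L^2}$ then requires $\gamma-1>n/2$, that is $N>3/2$; so for $N\in(1,3/2]$ your argument does not close. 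The ``calibration'' of the two hypotheses on $N$ that you assert is in fact not there: the $2(1/q-\mathrm{Re}\,\alpha)$ hypothesis is tailored to the $1/q$-derivative loss in Sogge's lemma, not the full-derivative loss you incur. To close the gap you would need to replace the elementary $t$-Sobolev step with Sogge's Lemma 2.4.2, and replace the ball-wise Plancherel bound by a stationary-phase estimate near $|x|=t$ (or, as the paper does, by the Seeger--Sogge--Stein FIO theorem together with a frequency decomposition).
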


\begin{proof}
We fix a function $\varphi$ as in \eqref{e2.4}. Let $\psi(r):=\varphi(r)-\varphi(2r)$ and $\psi_j(r):=\psi(2^{-j}r)$, for $j\geq 1$. So we have
\begin{align}\label{eee3.1}
  1\equiv  \varphi(r) +  \sum_{j\geq 1 }\psi_j(r), \quad r\geq0.
 \end{align}
For $j\geq1$, define
	$$
	{\mathscr E}_{N,j}f(x,t)=\int_{\mathbb R^n}  \left(e^{2\pi i(x\cdot \xi+t|\xi|)}{\mathcal E}_{N,1}\big(t|\xi|\big) + e^{2\pi i(x\cdot \xi-t|\xi|)}{\mathcal E}_{N,2}\big(t|\xi|\big)\right)  {\psi_j}(t|\xi|)\hat{f}(\xi)d\xi.
	$$
	To prove  \eqref{e2.6}, it suffices to show that there exists a constant  $\delta>0$ such that
	for all $j\geq 1$,
	\begin{eqnarray}\label{e2.7}
		\big\|\sup_{1\leq t\leq 2}|{\mathscr E}_{N,j}f(\cdot,t)| \big\|_{L^q(\Rn)}\leq C 2^{-\delta j}\|f\|_{L^p(\Rn)}.
	\end{eqnarray}

First, for each fixed $t\in[1,2]$, ${\mathscr E}_{N,j}\lb D\rb^{N/2}$ are the sum of two Fourier integral operators of order $-(n-1)/2-{\rm Re}\, \alpha-N/2$ with phase $x\cdot\xi\pm t|\xi|$.
	By \cite[Theorem 2, Chapter IX]{St1}  and the fact that $e^{it\sqrt{-\Delta}}$ is local at scale $t$, we have
	\begin{align}\label{e2.8}
	  \sup_{1\leq t\leq2}\big\|	 {\mathscr E}_{N,j}f(\cdot,t)\big\|_{L^q(\Rn)}&\leq C 2^{-((n-1)/2+{\rm Re}\,\alpha+N/2) j}2^{(n-1)|1/2-1/q|j}\|\lb D\rb^{-N/2}f\|_{L^q(\Rn)}\nonumber\\
&\leq C 2^{-({\rm Re}\,\alpha+N/2) j}\|f\|_{L^p(\Rn)},
	\end{align}
if we choose $N> 2n\left(\frac{1}{p}-\frac{1}{q}\right)$.

Next, we write 	 $\partial_{t}{\mathscr E}_{N,j}f(x,t)$ as the sum of following terms,
	\begin{eqnarray*}
		&&	\pm2\pi it^{-1}\int e^{2\pi i(x\cdot \xi\pm t|\xi|)}t|\xi|{\mathcal E}_{N,1}\big(t|\xi|\big) {\psi_j}(t|\xi|)\hat{f}(\xi)d\xi;\\
		&&
			 	\pm2\pi it^{-1}\int e^{2\pi i(x\cdot \xi\pm t|\xi|)}t|\xi|{\mathcal E}_{N,2}\big(t|\xi|\big) {\psi_j}(t|\xi|)\hat{f}(\xi)d\xi;\\
		&&
		t^{-1}\int e^{2\pi i(x\cdot \xi\pm t|\xi|)}t|\xi|({\mathcal E}_{N, 1}\psi_{j})'\big(t|\xi|\big) \hat{f}(\xi)d\xi;\\
		&&
		t^{-1}\int e^{2\pi i(x\cdot \xi\pm t|\xi|)}t|\xi|({\mathcal E}_{N, 2}\psi_{j})'\big(t|\xi|\big) \hat{f}(\xi)d\xi.
	\end{eqnarray*}
	By \eqref{e2.3}, we see that for each fixed $t\in[1,2]$, they are Fourier integral operators of order no more than $-(n-1)/2-{\rm Re}\, \alpha-N+1$.
	By \cite[Theorem 2, Chapter IX]{St1} again,
	\begin{align}\label{e2.9}
		\sup_{1\leq t\leq2}\big\|\partial_t{\mathscr E}_{N,j}f(\cdot,t) \big\|_{L^q(\Rn)}\leq C 2^{-({\rm Re}\,\alpha+N/2-1) j}\|f\|_{L^p(\Rn)}.
	\end{align}

With \eqref{e2.8} and  \eqref{e2.9} at our disposal,  we can apply    \cite[Lemma 2.4.2]{S1} to obtain
	$$
	\big\|\sup_{1\leq t\leq 2}|{\mathscr E}_{N,j}f(\cdot,t)| \big\|_{L^q(\Rn)}\leq C 2^{-({\rm Re}\,\alpha+N/2-1/q) j}\|f\|_{L^p(\Rn)}.
	$$
	Choosing $N>\max\{2(1/q-{\rm Re}\,\alpha),2n\left(\frac{1}{p}-\frac{1}{q}\right)\}$ and letting    $\delta={\rm Re}\,\alpha+N/2-1/q$, we  obtain  estimate \eqref{e2.7}. The  proof of Lemma~\ref{le2.2} is complete.
\end{proof}

\bigskip

At the end of this section, we define
\begin{align}\label{e2.10}
	{\mathscr A}_t f(x)= \int_{\Rn} \left( e^{2\pi i(x\cdot\xi+ t|\xi|)}a_1(t|\xi|) +  e^{2\pi i(x\cdot\xi- t|\xi|)}a_2(t|\xi|) \right) \hat{f}(\xi) \,\text{d}\xi.
\end{align}
From \eqref{e2.4}, Lemmas~\ref{lem2.1} and \ref{le2.2}, we see that the $(p,q)$-boundedness of
the operator $\frak M^\alpha $   reduces to the boundedness of the operator
${\mathscr A}_t $ on Sobolev spaces,
which will be investigated  in Section 3 below.

\medskip

\section{Proof of     Theorem~\ref{th1.1}}
\setcounter{equation}{0}


To prove    Theorem~\ref{th1.1}, we need to show the following Lemmas \ref{prop3.1}, \ref{prop3.4} and \ref{prop3.5}.

\begin{lemma}\label{prop3.1}
	Let $n\geq 2$ and $1\leq p,q\leq\infty$. Suppose
	\begin{align}\label{e3.1}
		\big\| {\frak M}_1^\alpha f\big\|_{L^q(\Rn)}\leq C\|f\|_{L^p(\Rn)}
	\end{align}
	holds for some $\alpha\in \mathbb{C}$. Then, we have $q\geq p$ and
	$${\rm Re}\,\alpha\geq \frac{1}{p}-\frac{n}{q}.
	$$
\end{lemma}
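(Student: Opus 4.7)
\emph{Plan.} The lemma has two independent claims, and I will treat them separately.

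\textbf{The bound $q \geq p$.} This is a standard consequence of translation invariance and compactness of the convolution kernel. Fix any nonzero $f \in C_c(\mathbb R^n)$ with ${\frak M}_1^\alpha f \not\equiv 0$; since $\text{supp}(m_\alpha) \subset \overline{B(0,1)}$, both $f$ and ${\frak M}_1^\alpha f$ have compact support. Choose points $x_1,\ldots,x_N \in \mathbb R^n$ pairwise separated by a distance larger than $\text{diam}(\text{supp}\,f) + 4$ and set $f_N(x) = \sum_{j=1}^N f(x - x_j)$. Then the translates appearing in $f_N$ are disjointly supported, as are those in ${\frak M}_1^\alpha f_N = \sum_j ({\frak M}_1^\alpha f)(\cdot - x_j)$, so $\|f_N\|_p = N^{1/p}\|f\|_p$ and $\|{\frak M}_1^\alpha f_N\|_q = N^{1/q}\|{\frak M}_1^\alpha f\|_q$. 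Inserting these into \eqref{e3.1} yields $N^{1/q} \leq C N^{1/p}$ for every $N \in \mathbb N$, whence $q \geq p$.

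\textbf{The bound ${\rm Re}\,\alpha \geq 1/p - n/q$.} I will test \eqref{e3.1} against a smooth bump concentrated in an $\epsilon$-shell straddling the unit sphere. Fix $\psi \in C_c^\infty(\mathbb R)$ with $\text{supp}\,\psi \subset (-1,1)$ and set $f_\epsilon(y) = \psi((|y| - 1)/\epsilon)$; this is supported in $\{1-\epsilon \leq |y| \leq 1+\epsilon\}$, so $\|f_\epsilon\|_p \sim \epsilon^{1/p}$. Evaluating at $x = 0$ and passing to polar coordinates with $s = 1 - r$, the integral reduces to
\[
({\frak M}_1^\alpha f_\epsilon)(0) = \frac{\omega_{n-1}}{\Gamma(\alpha)} \int_{1-\epsilon}^{1} (1-r^2)^{\alpha - 1} \psi\!\left(\tfrac{r-1}{\epsilon}\right) r^{n-1}\,\mathrm dr \;=\; C(\alpha,\psi)\,\epsilon^\alpha,
\]
where $C(\alpha, \psi) = (\omega_{n-1} 2^{\alpha - 1}/\Gamma(\alpha))\int_0^1 t^{\alpha-1} \psi(-t)(1+O(\epsilon t))\,\mathrm dt$ after substituting $t = s/\epsilon$. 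A standard analytic-continuation calculation (splitting off Taylor jets of $\psi$ at $0$) shows that $\alpha \mapsto C(\alpha, \psi)$ extends to an entire function whose value at $\alpha = -k$ is a nonzero multiple of $\psi^{(k)}(0)$; choosing $\psi$ generically (e.g.\ $\psi(t) = e^{Mt} \chi(t)$ with $\chi$ a plateau near $0$ and $M > 0$ large) ensures $C(\alpha, \psi) \neq 0$ for our fixed $\alpha$. Hence $|({\frak M}_1^\alpha f_\epsilon)(0)| \gtrsim_\alpha \epsilon^{{\rm Re}\,\alpha}$.

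A small-perturbation estimate extends this pointwise lower bound to a full neighborhood $|x| \leq c\epsilon$. Indeed, $\nabla_x ({\frak M}_1^\alpha f_\epsilon) = -(m_\alpha \ast \nabla f_\epsilon)$ has size at most $O(\epsilon^{{\rm Re}\,\alpha - 1})$ by the same kind of computation (one derivative on $\psi$ costs a factor $\epsilon^{-1}$), so for $c > 0$ chosen small depending on the implicit constant,
\[
|({\frak M}_1^\alpha f_\epsilon)(x)| \geq |({\frak M}_1^\alpha f_\epsilon)(0)| - C \epsilon \cdot \epsilon^{{\rm Re}\,\alpha - 1} \gtrsim \epsilon^{{\rm Re}\,\alpha}, \qquad |x| \leq c\epsilon.
\]
Integrating over $B(0,c\epsilon)$ (volume $\sim \epsilon^n$) gives $\|{\frak M}_1^\alpha f_\epsilon\|_{L^q} \gtrsim \epsilon^{{\rm Re}\,\alpha + n/q}$. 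Combined with \eqref{e3.1} and $\|f_\epsilon\|_p \sim \epsilon^{1/p}$, we obtain $\epsilon^{{\rm Re}\,\alpha + n/q} \leq C \epsilon^{1/p}$ uniformly in small $\epsilon$, which forces ${\rm Re}\,\alpha + n/q \geq 1/p$, i.e.\ ${\rm Re}\,\alpha \geq 1/p - n/q$.

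\textbf{Main obstacle.} The delicate point is producing a lower bound on the \emph{modulus} of ${\frak M}_1^\alpha f_\epsilon$ for complex $\alpha$, since in principle the oscillatory factor $(1-|y|^2)^{i\,{\rm Im}\,\alpha}$ could cause cancellation in the radial integral. Choosing a smooth bump $\psi$ makes the integral an explicit entire-in-$\alpha$ expression of the form $C(\alpha,\psi)\epsilon^\alpha$; the only task is to verify that $C(\alpha,\psi)$ is nonzero at the fixed $\alpha$ of interest, which a generic choice of $\psi$ accomplishes since the zero set of $\alpha \mapsto C(\alpha,\psi)$ is discrete. The rest of the argument is routine.
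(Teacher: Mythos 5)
Your proof is correct but takes a genuinely different route from the paper. The paper first passes to the Fourier side: using the asymptotic expansion of $\widehat{m_\alpha}$ (Lemmas~\ref{lem2.1}, \ref{le2.2}) it reduces the lemma to showing that $\|\mathscr A_1 f\|_{L^q}\lesssim\|f\|_{W^{s,p}}$ forces $s\geq\tfrac{n-1}{2}+\tfrac1p-\tfrac nq$, where $\mathscr A_1$ is a sum of two half-wave operators $e^{\pm 2\pi i\sqrt{-\Delta}}a_{1,2}(|D|)$; it then tests against $\widehat{f_j}(\xi)=e^{-2\pi i|\xi|}\phi(2^{-j}|\xi|)$, whose built-in phase cancels that of $e^{2\pi i\sqrt{-\Delta}}$ and makes that piece of size $\sim2^{nj}$ on a ball of radius $\sim2^{-j}$, while the $e^{-2\pi i\sqrt{-\Delta}}$ piece is $O(1)$ there by non-stationary phase. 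You instead test ${\frak M}_1^\alpha$ directly, in physical space, against a thin radial shell bump $f_\epsilon(y)=\psi((|y|-1)/\epsilon)$, evaluate the convolution at the center in polar coordinates to get $\sim C(\alpha,\psi)\,\epsilon^\alpha$, and extend the lower bound $\gtrsim\epsilon^{{\rm Re}\,\alpha}$ to a ball $|x|\lesssim\epsilon$ via a gradient estimate; the exponent count then gives the same conclusion. Your approach is more elementary and self-contained — it bypasses the multiplier decomposition of Section~\ref{s2} entirely, in particular Lemma~\ref{le2.2} — at the cost of having to ensure the radial integral does not vanish at the given complex $\alpha$, which you handle by a genericity argument on $\psi$; the paper sidesteps this because the symbols $a_1,a_2$ are normalized via \eqref{e2.7a}--\eqref{e2.8a} to have controlled modulus and argument, so no cancellation occurs. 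The one place a careful write-up of your argument needs the most attention is the case ${\rm Re}\,\alpha\le 0$: both the pointwise and the gradient bound must be read through the analytic continuation of $(1-|y|^2)_+^{\alpha-1}/\Gamma(\alpha)$ (subtracting Taylor jets of $\psi$ at $0$, as you indicate), and one should verify that the $\epsilon$-scaling survives; it does, since each jet costs $\epsilon^{-1}$ from a derivative of $\psi$ but gains an extra $\epsilon$ from the raised power of $(1-|y|^2)$, but this is the technical step you gloss over.
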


\begin{proof}
  Fix  $N>\max\left\{2\big(\frac{1}{q}-{\rm Re}\,\alpha\big), 2n\left(\frac{1}{p}-\frac{1}{q}\right)\right\}$.  Let ${\mathscr A}_1$ be an operator given in \eqref{e2.10}.
From \eqref{e2.4}, Lemma~\ref{lem2.1} and Lemma~\ref{le2.2}, we see that   the proof of Lemma~\ref{prop3.1} reduces to  the following result:
	Suppose
	\begin{align}\label{e3.2}
		\|{\mathscr A}_1f \|_{L^q(\Rn)}\leq C\|f\|_{W^{s,p}(\Rn)}
	\end{align}
	holds for some $s\in\R$. Then  we have $q\geq p$ and
$$s\geq \frac{n-1}{2}+\frac{1}{p}-\frac{n}{q}.$$

Indeed, since  ${\mathscr A}_1$ is
translation-invariant,  it follows by \cite[Proposition 2.5.6]{G1} that $q\geq p$.
Let $\phi\in C^{\infty}_{c}(\R\setminus\{0\})$ be nonnegative and equal to one  on $[1,2]$. For $j \geq1$, define
$$
\widehat{f_{j}}(\xi):=e^{-2\pi i|\xi|}\phi(2^{-j}|\xi|).
$$
It follows \cite[Lemma 2.1]{Z} that $\|f_{j}\|_{L^{p}(\R^{n})}
\leq C2^{(\frac{n+1}{2}-\frac{1}{p})j}.$  Then we have
\begin{align}\label{eee3.0}
\|{\mathscr A}_1f_{j}\|_{L^{q}(\R^{n})}\leq C\|f_{j}\|_{W^{s,p}(\R^{n})}\leq C 2^{sj}\|f_{j}\|_{L^{p}(\R^{n})}
\leq C2^{(s+\frac{n+1}{2}-\frac{1}{p})j}.
\end{align}
If $j$ large enough, we may apply  \eqref{e2.7a} and \eqref{e2.8a} to obtain that
\begin{align*}
\left|\int_{\Rn}a_{1}(|\xi|)\phi(2^{-j}|\xi|)\,d\xi\right|&\geq C^{-1}\int_{\Rn}\left|a_{1}(|\xi|)\phi(2^{-j}|\xi|)\right|\,d\xi\geq
C^{-1}c_{low}\int_{\Rn}\left|\phi(2^{-j}|\xi|)\right|\,d\xi\\
&\geq C^{-1}c_{low}2^{nj}.
\end{align*}
 Then  taking  $\varepsilon>0$ small enough, we use \eqref{eee3.1} to obtain that for all  $|x|\leq \varepsilon 2^{-j}$,
\begin{align}\label{eee3.2}
\left|e^{2\pi i\sqrt{-\Delta}}a_{1}(|D|)f_{j}(x)\right|&=\left|\int_{\Rn}a_{1}(|\xi|)\phi(2^{-j}|\xi|) e^{-2\pi ix\cdot \xi}\, d\xi\right|\nonumber\\
&=\left|\int_{\Rn}a_{1}(|\xi|)\phi(2^{-j}|\xi|)\,d\xi+\int_{\Rn}a_{1}(|\xi|)\phi(2^{-j}|\xi|) \big(e^{-2\pi ix\cdot \xi}-1\big)\,d\xi\right|\nonumber\\
&\geq\left|\int_{\Rn} a_{1}(|\xi|)\phi(2^{-j}|\xi|)d\xi\right|-C\varepsilon \int \left|a_{1}(|\xi|)\phi(2^{-j}|\xi|)\right|\,d\xi\nonumber\\
&\geq (2C)^{-1}\int \left|a_{1}(|\xi|)\phi(2^{-j}|\xi|)\right|\,d\xi\geq c2^{nj}
\end{align}
for some $c>0$. We can write
\begin{align*}
e^{-2\pi i\sqrt{-\Delta}}a_{2}(|D|)f_{j}(x)=\int_{\Rn}e^{-2\pi ix\cdot\xi}e^{-4\pi i|\xi|}a_{2}(|\xi|)\phi(2^{-j}|\xi|)\,d\xi.
\end{align*}
Note that the phase function $-2\pi x\cdot\xi-4\pi |\xi|$ has no critical points when $|x|\leq \varepsilon$ and $\varepsilon$ is small enough. So, by integration by parts, we have
\begin{align*}
\sup_{|x|\leq \varepsilon 2^{-j}}|e^{-2\pi i\sqrt{-\Delta}}a_{2}(|D|)f_{j}(x)|\leq C,
\end{align*}
which, combined with \eqref{eee3.2}, implies
\begin{align}\label{eee3.3}
\|{\mathscr A}_1f_{j}\|_{L^{q}(\R^{n})}\geq \|{\mathscr A}_1f_{j}\|_{L^{q}(|x|\leq \varepsilon 2^{-j})}
\geq C_{\varepsilon}2^{(n-n/q)j}.
\end{align}
Combining \eqref{eee3.0} and \eqref{eee3.3}, we have $C_{\varepsilon}2^{(n-n/q)j}\leq C2^{(s+\frac{n+1}{2}-\frac{1}{p})j}$. Letting $j\rightarrow+\infty$, we conclude that $s\geq\frac{n-1}{2}+\frac{1}{p}-\frac{n}{q}$.
\end{proof}

\begin{lemma}\label{prop3.4}
	Let $n\geq 2$, $1\leq p,q\leq\infty$. Suppose
	\begin{align}\label{e3.11}
		\big\| \sup_{1\leq t\leq 2}|\frak{M}^\alpha_t f| \big\|_{L^q(\Rn)}\leq C\|f\|_{L^p(\Rn)}
	\end{align}
	holds for some $\alpha\in \mathbb{C}$. Then, we have $q\geq p$ and
	$$
	{\rm Re}\,\alpha\geq \frac{n+1}{2p}-\frac{n-1}{2q}-\frac{n-1}{2}.
	$$
\end{lemma}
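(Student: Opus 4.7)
The plan is to mirror the strategy of Lemma~\ref{prop3.1}. By combining the asymptotic decomposition \eqref{e2.4} with Lemmas~\ref{lem2.1} and \ref{le2.2}, the bound \eqref{e3.11} is equivalent, with $s := \frac{n-1}{2} + {\rm Re}\,\alpha$, to the Sobolev estimate
\[
\bigl\|\sup_{1\le t\le 2}|\mathscr{A}_t f|\bigr\|_{L^q(\R^n)} \le C\|f\|_{W^{s,p}(\R^n)},
\]
so it suffices to show this forces $q \ge p$ and $s \ge \frac{n+1}{2p} - \frac{n-1}{2q}$. The condition $q \ge p$ follows by a translation-invariance argument analogous to the one in Lemma~\ref{prop3.1}, adapted to the sublinear setting: the operator $f \mapsto \sup_{t \in [1,2]}|\mathscr{A}_t f|$ commutes with spatial translations and has essentially finite propagation at unit scale (since $\mathscr{A}_t$ differs from a linear combination of half-wave operators only by a Schwartz tail), so summing $N$ widely separated translates of a fixed bump produces approximately disjointly supported outputs and sending $N \to \infty$ forces $1/q \le 1/p$.

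For the lower bound on $s$ we deploy a Knapp-type example adapted to the half-wave propagator. Fix $\phi \in C_c^\infty(\R\setminus\{0\})$ nonnegative and equal to $1$ on $[1,2]$, and for large $j$ set
\[
\widehat{f_j}(\xi) := \phi\bigl(2^{-j}(\xi_n - 2^j)\bigr) \prod_{i=1}^{n-1}\phi\bigl(2^{-j/2}\xi_i\bigr),
\]
so $\widehat{f_j}$ is supported in a Knapp slab of dimensions $2^{j/2}\times\cdots\times 2^{j/2}\times 2^j$ around $2^j e_n$. A direct Fourier computation of the type used in \cite[Lemma 2.1]{Z} gives $\|f_j\|_{L^p(\R^n)} \sim 2^{j(n+1)/(2p')}$, whence $\|f_j\|_{W^{s,p}(\R^n)} \lesssim 2^{j(s + (n+1)/(2p'))}$.

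The crucial geometric input is that on $\supp\widehat{f_j}$ the Taylor expansion $|\xi| = \xi_n + |\xi'|^2/(2\xi_n) + O(2^{-j})$ holds with $|\xi'|^2/\xi_n = O(1)$, so the phase $x\cdot\xi + t|\xi|$ appearing in the $a_1$-piece of $\mathscr{A}_t$ is uniformly $O(1)$ whenever $|x'| \le c 2^{-j/2}$ and $|x_n + t| \le c 2^{-j}$ with $c$ small enough. For each $x_n \in [-2,-1]$ we pick the witness $t = -x_n \in [1,2]$, and then \eqref{e2.7a} and \eqref{e2.8a} applied to $a_1$ give $|\mathscr{A}_t f_j(x)| \gtrsim 2^{j(n+1)/2}$ on the tube $T_j := \{(x',x_n) : |x'| \le c 2^{-j/2},\ x_n \in [-2,-1]\}$ of measure $\sim 2^{-j(n-1)/2}$. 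The conjugate $a_2$-piece has phase derivative $\partial_{\xi_n}(x\cdot\xi - t|\xi|) = x_n - t\,\xi_n/|\xi|$ with $|x_n - t| \ge 2$ on $T_j$, so repeated integration by parts renders it $O(1)$. Consequently
\[
\bigl\|\sup_{1\le t\le 2}|\mathscr{A}_t f_j|\bigr\|_{L^q(\R^n)} \gtrsim 2^{j((n+1)/2 - (n-1)/(2q))},
\]
which, combined with the Sobolev bound on $\|f_j\|_{W^{s,p}}$ and letting $j \to \infty$, yields $s \ge \frac{n+1}{2p} - \frac{n-1}{2q}$; substituting $s = \frac{n-1}{2} + {\rm Re}\,\alpha$ produces the advertised inequality.

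The main technical obstacle is quantifying the contribution of the $a_2$-piece on the focused tube $T_j$: one must verify via non-stationary phase, exploiting the derivative estimates built into \eqref{ee2.1}, that this piece is truly $O(1)$ and so cannot cancel the main $a_1$-contribution of size $2^{j(n+1)/2}$. Once that is in hand, everything else is volume counting.
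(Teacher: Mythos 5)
Your overall route matches the paper's: reduce to a Sobolev estimate for $\sup_t|\mathscr{A}_t f|$, run a Knapp-type example of dimensions $2^{j/2}\times\cdots\times 2^{j/2}\times 2^j$, isolate the constructive interference in one half-wave piece, kill the other by non-stationary phase, and compare $\|f_j\|_{W^{s,p}}$ against the $L^q$ lower bound on the tube of measure $\sim 2^{-j(n-1)/2}$. The numerology in the last step is correct.

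However, there is a genuine gap in the constructive-interference step. Your test function has $\xi_i \sim 2^{j/2}$ for $i<n$ (since $\phi$ is supported away from the origin and normalised at scale $2^{j/2}$), so $|\xi'|^2/(2\xi_n)$ ranges over an interval of length comparable to $n-1$ (and in fact of length several units even for $n=2$, depending on $\supp\phi$). Multiplied by $2\pi$ in the exponent, the phase $x\cdot\xi + t|\xi|$ sweeps through several full periods as $\xi'$ ranges over the slab, even after you fix the witness $t=-x_n$ and restrict to $|x'|\le c2^{-j/2}$. Conditions \eqref{e2.7a} and \eqref{e2.8a} only control the \emph{amplitude} $a_1$, not this oscillation, so they do not by themselves yield $|\mathscr{A}_t f_j(x)|\gtrsim 2^{j(n+1)/2}$; the Fresnel-type cancellation could in principle wipe out the main term. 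The paper avoids this by building a small parameter $\delta$ into the transverse scale of the slab ($|\xi'|\le \delta 2^{j/2}$ and $|\xi_1-2^j|\le \delta 2^{j-1}$), which forces the entire phase variation to be $O(\delta)$ and makes the lower bound immediate; this is precisely what the cited estimate (3.22) of \cite{LSSY} encodes. To repair your argument, replace $\prod_i\phi(2^{-j/2}\xi_i)$ by $\prod_i\phi(\delta^{-1}2^{-j/2}\xi_i)$ (and similarly in the radial factor) with $\delta$ small, and track the resulting $\delta^n$ loss, which is harmless as $j\to\infty$.

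Two smaller remarks. For $q\ge p$ the paper simply observes that the sup bound implies the fixed-$t$ bound and invokes translation invariance of the \emph{linear} operator $\mathscr{A}_t$ via \cite[Proposition 2.5.6]{G1}; your separate "finite propagation + disjoint translates" argument is unnecessary overhead, though not wrong. Your non-stationary-phase dismissal of the $a_2$-piece on the tube is sound: $|\partial_{\xi_n}(x\cdot\xi - t|\xi|)|\gtrsim 1$ there and each integration by parts gains a factor $\sim 2^{-j}$ from the amplitude, beating the volume $\sim 2^{j(n+1)/2}$.
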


\begin{proof}
 Fix  $N>\max\left\{2\left(\frac{1}{q}-{\rm Re}\,\alpha\right), 2n\left(\frac{1}{p}-\frac{1}{q}\right)\right\}$ as  in Lemma~\ref{le2.2}.
From \eqref{e2.4}, Lemma~\ref{lem2.1} and   Lemma~\ref{le2.2}, we see that  the proof of Lemma~\ref{prop3.4}
reduces to show the following: Suppose
	\begin{align}\label{e3.4}
		\big\|\sup_{1\leq t\leq 2}|{\mathscr A}_t f|\big\|_{L^q(\R^n)}\leq C\|f\|_{W^{s,p}(\Rn)}
	\end{align}
	holds for some $s\in\R$. 	Then   we have $q\geq p$ and $s\geq\frac{n+1}{2p}-\frac{n-1}{2q}$.

Note that \eqref{e3.4} implies that $\big\|{\mathscr A}_t  f\big\|_{L^q(\R^n)}\leq C\|f\|_{W^{s,p}(\Rn)}$. This fact,  together with the translation-invariant property of ${\mathscr A}_t$,  yields that $q\geq p$.
Now let us prove $s\geq\frac{n+1}{2p}-\frac{n-1}{2q}$. Denote  $\xi= (\xi_1, \xi^\prime)\in \Rn$. For $j\geq1$ and $\delta>0$, we let $\hat{f}\geq0$  be a smooth cut-off function of
	the set
	\begin{align*}
		 \left\{(\xi_1,\xi^\prime)\in\Rn:|\xi_1-2^j|\leq \delta 2^{j-1}, |\xi^\prime|\leq \delta 2^{j/2}\right\}
	\end{align*}
	such that $\big|\partial_{\xi}^{\beta} \hat f(\xi)\big|\leq C_{\delta,\beta} 2^{-j|\beta'|/2}2^{-j|\beta_{1}|}$ for any $\beta=(\beta_{1},\beta')\in \mathbb{Z}_{+}^{n}$.

    It follows from (3.22) of \cite{LSSY} that, if $j$ is large enough and $\delta$ is small enough, we have
	\begin{align}\label{e3.18}
		\sup_{1\leq t\leq 2}|{\mathscr A}_t f|\geq C^{-1}\delta^{n}2^{\frac{n+1}{2}j},
	\end{align}
	for all $1\leq x_1\leq 2$, $|x^\prime|\leq 2^{-j/2}$. Then we have
\begin{align}\label{eee3.6}
\big\|\sup_{1\leq t\leq 2}|{\mathscr A}_t f|\big\|_{L^q(\R^n)}
\geq \big\|\sup_{1\leq t\leq 2}|{\mathscr A}_t f|\big\|_{L^{q}(1\leq x_1\leq 2,\, |x^\prime|\leq 2^{-j/2} )}
\geq C^{-1}\delta^{n}2^{\frac{n+1}{2}j}2^{-\frac{n-1}{2q}j}.
\end{align}
By \eqref{e3.4} and the definition of $f$, we have
\begin{align}\label{eee3.7}
\big\|\sup_{1\leq t\leq 2}|{\mathscr A}_t f|\big\|_{L^q(\R^n)}
\leq C\|f\|_{W^{s,p}} \leq C_\delta 2^{sj}2^{\frac{n+1}{2}j-\frac{n+1}{2p}j}.
\end{align}
Combining \eqref{eee3.6} and \eqref{eee3.7}, we obtain
\begin{align*}
C^{-1}\delta^{n}2^{\frac{n+1}{2}j}2^{-\frac{n-1}{2q}j}\leq C_\delta 2^{sj}2^{\frac{n+1}{2}j-\frac{n+1}{2p}j}.
\end{align*}
Let $j\rightarrow+\infty$ and we have $\frac{n+1}{2}-\frac{n-1}{2q}\leq s+\frac{n+1}{2}-\frac{n+1}{2p}$, which means $s\geq\frac{n+1}{2p}-\frac{n-1}{2q}$.
\end{proof}

\begin{lemma}\label{prop3.5}
	Let $n\geq 2$, $1\leq p,q\leq\infty$. Suppose
	\begin{align}\label{ees3.11}
		\big\| \sup_{1\leq t\leq 2}|\frak{M}^\alpha_t f| \big\|_{L^q(\Rn)}\leq C\|f\|_{L^p(\Rn)}
	\end{align}
	holds for some $\alpha\in \mathbb{C}$. Then, we have $q\geq p$ and
	$$
	{\rm Re}\,\alpha\geq \frac{n}{p}-n+1.
	$$
\end{lemma}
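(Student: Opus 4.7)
The plan is to prove Lemma~\ref{prop3.5} by following the template of Lemmas~\ref{prop3.1} and \ref{prop3.4}. First, applying the decomposition~\eqref{e2.4} together with Lemmas~\ref{lem2.1} and \ref{le2.2} reduces the hypothesis~\eqref{ees3.11} to the inner claim: if
\[
\big\|\sup_{1\leq t\leq 2}|\mathscr A_t f|\big\|_{L^q(\R^n)} \leq C\|f\|_{W^{s,p}(\R^n)}
\]
holds for some $s\in\R$, then $q\geq p$ and $s\geq n/p-(n-1)/2$. Since the reduction forces $s=(n-1)/2+\mathrm{Re}\,\alpha$, this is equivalent to the desired $\mathrm{Re}\,\alpha\geq n/p-n+1$; translation invariance of $\mathscr A_t$ again yields $q\geq p$.

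To realize the lower bound on $s$ I would use a frequency-localized test function whose image under $\mathscr A_t$ focuses on a set of full $n$-dimensional measure, in contrast with the Knapp-type set of Lemma~\ref{prop3.4}. Let $\phi\in C_c^\infty([1,2])$ be a nonnegative bump equal to one on $[5/4,7/4]$ and $\chi\in C^\infty(\R)$ be a smooth cutoff equal to one on $[1/2,1]$ and supported in $(1/4,1]$; set
\[
\hat f_j(\xi) := \phi\big(2^{-j}|\xi|\big)\,\chi\big(\xi_1/|\xi|\big).
\]
Since $\hat f_j$ is smooth with $\|\hat f_j\|_\infty\lesssim 1$ and supported in a set of volume $\sim 2^{jn}$, repeated integration by parts in $\xi$ (each derivative of $\hat f_j$ costing a factor $2^{-j}$) gives $|f_j(x)|\lesssim_N 2^{jn}(1+2^j|x|)^{-N}$, whence $\|f_j\|_{L^p(\R^n)}\lesssim 2^{jn/p'}$ and $\|f_j\|_{W^{s,p}(\R^n)}\lesssim 2^{j(s+n/p')}$.

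For the matching lower bound I would write $\mathscr A_t f_j = I_1 + I_2$ according to the two terms in \eqref{e2.10}. For each $x$ in the conical half-annulus $\Omega:=\{x\in\R^n : |x|\in[1,2],\ x_1\leq -|x|/2\}$ choose $t=|x|\in[1,2]$. The phase $x\cdot\xi+t|\xi|$ of $I_1$ has a critical point at $\hat\xi=-\hat x$, which lies in the interior of the directional support of $\chi$ because $-\hat x_1\geq 1/2$; the radial component $r(-|x|+t)$ of the phase vanishes at the focal time. Writing $\xi=r\omega$ and applying stationary phase on $S^{n-1}$ in the $(n-1)$ angular directions (the Hessian is $r|x|I_{n-1}$ with signature $+(n-1)$), then integrating in $r$ and invoking \eqref{e2.7a}, one obtains
\[
|I_1(x,|x|)| \gtrsim |x|^{-(n-1)/2}\int_0^\infty |a_1(|x|r)|\,\phi(r/2^j)\,r^{(n-1)/2}\,dr \gtrsim 2^{j(n+1)/2}.
\]
The stationary point of the phase of $I_2$ would be $\hat\xi=\hat x$, but $\hat x_1\leq -1/2$ puts it outside the support of $\chi$, and on that support the angular phase gradient $|x|\sqrt{1-(\hat x\cdot\omega)^2}$ is uniformly bounded below, so repeated integration by parts in $\omega$ yields $|I_2(x,|x|)|\lesssim_N 2^{-jN}$ for every $N$. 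Consequently $\sup_{1\leq t\leq 2}|\mathscr A_t f_j(x)|\gtrsim 2^{j(n+1)/2}$ on $\Omega$, and since $|\Omega|\sim 1$, $\big\|\sup_t|\mathscr A_t f_j|\big\|_{L^q(\R^n)}\gtrsim 2^{j(n+1)/2}$.

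Combining this with $\|f_j\|_{W^{s,p}}\lesssim 2^{j(s+n/p')}$ and letting $j\to\infty$ yields $(n+1)/2\leq s+n/p'$, that is $s\geq n/p-(n-1)/2$, as required. The main technical difficulty is the uniform stationary-phase lower bound for $I_1$ across $\Omega$; the directional cutoff $\chi$ is essential in that it leaves only one propagator branch contributing on the focal set and thereby circumvents the potential cancellation between $I_1$ and $I_2$ that can occur for a purely radial $\hat f_j$ at exceptional values of $\alpha$.
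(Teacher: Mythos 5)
Your proof is correct and follows essentially the same strategy as the paper's: a frequency-$2^j$ test function with Fourier support in a fixed cone about $e_1$, evaluated on a unit-scale spatial annulus in a direction chosen so that one wave branch is stationary at $t=|x|$ and the other is non-stationary, yielding a lower bound $\gtrsim 2^{j(n+1)/2}$ against the upper bound $\|f_j\|_{W^{s,p}}\lesssim 2^{j(s+n/p')}$. The only differences are cosmetic: you place $x$ near $-e_1$ so that the $a_1$-branch $I_1$ carries the stationary point and $I_2$ is negligible, whereas the paper places $x$ near $e_1$ and makes the $a_2$-branch dominant; and you run the stationary-phase computation directly on the sphere, while the paper quotes the explicit asymptotics of $\widehat{\chi\,d\sigma}$ from Stein's book. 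One small remark: to pass from $\bigl|\int a_1(|x|r)\phi(2^{-j}r)r^{(n-1)/2}\,dr\bigr|$ to $\int |a_1|\phi\,r^{(n-1)/2}\,dr$ you need not only the modulus lower bound \eqref{e2.7a} but also the near-constancy of $\arg a_1$ from \eqref{e2.8a}, which you should cite alongside it, as the paper does.
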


\begin{proof}
Fix  $N>\max\left\{2\left(\frac{1}{q}-{\rm Re}\,\alpha\right), 2n\left(\frac{1}{p}-\frac{1}{q}\right)\right\}$ as  in Lemma~\ref{le2.2}.
By using \eqref{e2.4} and Lemmas~\ref{lem2.1} and \ref{le2.2},   Lemma \ref{prop3.5}
reduces to  the following result:   Suppose
	\begin{align}\label{e3.4s}
		\big\|\sup_{1\leq t\leq 2}|{\mathscr A}_t f|\big\|_{L^q(\R^n)}\leq C\|f\|_{W^{s,p}(\Rn)}
	\end{align}
	holds for some $s\in\R$. 	Then   we have $q\geq p$ and $s\geq\frac{n}{p}-\frac{n-1}{2}$.

Since ${\mathscr A}_t$ is translation-invariant, we have that $q\geq p$.
Now let us prove $s\geq\frac{n}{p}-\frac{n-1}{2}$.  Assume that $\chi(\xi)\in C^\infty(\Rn\backslash \{0\})$ is homogeneous of order $0$ satisfying
$\chi(\xi)=1$ if $|{\xi\over |\xi|}-v_1|\leq 10^{-2}$
and vanishes if $|{\xi\over |\xi|}-v_1|\geq 9^{-2}$, where $v_1:=(1,0,\cdots,0)$. Let $\hat{f_{j}}(\xi):=\phi(2^{-j}|\xi|)\chi(\xi)$, where $\phi\in C^{\infty}_{c}(\R\setminus\{0\})$, $\phi=1$ on $[1,2]$ and $\phi\geq0$.

Note that
\begin{align*}
e^{2\pi it\sqrt{-\Delta}}a_{1}(t|D|)f_{j}(x)&=\int e^{2\pi ix\cdot\xi}e^{2\pi it|\xi|}a_{1}(t|\xi|)\phi(2^{-j}|\xi|)\chi(\xi)\,d\xi.
\end{align*}
If $|{x\over |x|}-v_1|\leq 10^{-2}$, the phase function of the above integral has no critical points and thus
\begin{align}\label{eee3.10}
\sup\limits_{t\in [1,2]}|e^{2\pi it\sqrt{-\Delta}}a_{1}(t|D|)f_{j}(x)|\leq C2^{-nj}.
\end{align}

It is known from \cite[p. 360]{St1} that for $|x|\geq 1$ and $|{x\over |x|}-v_1|\leq 9^{-2}$, there holds
	\begin{align*}
		 \widehat{\chi\text{d}\sigma}(-x)= e^{2\pi i|x|}h(-x)+e(-x),
	\end{align*}
	where $e$ belongs to ${\it S}^{-\infty}$ and $h\in {\it S}^{-(n-1)/2}$ can be splited into two terms:
	\begin{eqnarray*}
		h(x)= c_0|x|^{-(n-1)/2}\chi(-x/|x|)+ \tilde{e}(x),\ \ \tilde{e}\in {\it S}^{-(n+1)/2}
	\end{eqnarray*}
for all $|x|\geq1$.
Then for all $|x|\geq1$, $|{x\over |x|}-v_1|\leq 9^{-2}$ and $j\geq1$, we have
\begin{align*}
&e^{-2\pi it\sqrt{-\Delta}}a_{2}(t|D|)f_{j}(x)\\
&=\int e^{2\pi ix\cdot\xi}e^{-2\pi it|\xi|}a_{2}(t|\xi|)\phi(2^{-j}|\xi|)\chi(\xi)\,d\xi\\
&=\int_0^\infty (\chi d\sigma)^\wedge(-rx)e^{-2\pi itr}a_{2}(tr)\phi(2^{-j}r)r^{n-1}\,dr\\
&=c_{0}|x|^{-\frac{n-1}{2}}\int_{0}^{\infty}  e^{2\pi ir(|x|-t)}\chi(x/|x|)a_{2}(tr)\phi(2^{-j}r) r^{\frac{n-1}{2}} \,\text{d}r+R_{j}(x,t),
\end{align*}
where
\begin{align*}
R_{j}(x,t):=&c_{1}\int e^{2\pi ir(|x|-t)}\tilde{e}(-rx)a_{2}(tr)\phi(2^{-j}r)r^{n-1}\,dr\\
&+c_{2}\int e^{-2\pi itr}e(-rx)a_{2}(tr)\phi(2^{-j}r)r^{n-1}\,dr
\end{align*}
and $c_{0},c_{1},c_{2}>0$.
Then if $|{x\over |x|}-v_1|\leq 10^{-2}$ and $\big||x|-t\big|\leq \delta 2^{-j}$ for some $\delta\in(0,1)$ small enough, by \eqref{e2.7a} and \eqref{e2.8a} we have
\begin{align*}
\big|e^{-2\pi it\sqrt{-\Delta}}a_{2}(t|D|)f_{j}(x)\big|\geq c\bigg|\int \phi(2^{-j}r)r^{\frac{n-1}{2}}dr\bigg|-C2^{\frac{n-1}{2}j}\geq \tilde{c} 2^{\frac{n+1}{2}j}
\end{align*}
for some positive constant $\tilde{c}$, if $j$ large enough. Then if $|x|\in[1,2]$ and $\big|{x\over |x|}-v_1\big|\leq 10^{-2}$,
$$
\sup_{t\in[1,2]}\big|e^{-2\pi it\sqrt{-\Delta}}a_{2}(t|D|)f_{j}(x)\big|\geq \tilde{c} 2^{\frac{n+1}{2}j},
$$
which, combined with \eqref{eee3.10}, yields

\begin{align*}
\sup_{t\in[1,2]}|{\mathscr A}_t f_{j}| \geq \sup_{t\in[1,2]}\big|e^{-2\pi it\sqrt{-\Delta}}a_{2}(t|D|)f_{j}\big|-\sup_{t\in[1,2]}\big|e^{2\pi it\sqrt{-\Delta}}a_{1}(t|D|)f_{j}\big|
\geq \frac{\tilde{c}}{2} 2^{\frac{n+1}{2}j}
\end{align*}
if $j$ large enough.
Hence,
\begin{align}\label{eee3.11}
\big\|\sup_{t\in[1,2]}|{\mathscr A}_tf_{j}|\big\|_{L^{q}(\Rn)}\geq \big\|\sup_{t\in[1,2]}|{\mathscr A}_tf_{j}|\big\|_{L^{q}(|x|\in[1,2], |{x\over |x|}-v_1|\leq 10^{-2})}
\geq  C^{-1}\frac{\tilde{c}}{2} 2^{\frac{n+1}{2}j}.
\end{align}
On the other hand, choose $\tilde{\phi}\in C^{\infty}_{c}(\R\setminus\{0\})$ satisfying $\tilde{\phi}=1$ on $\supp \phi$, then $\hat{f_{j}}(\xi)=\phi(2^{-j}|\xi|)\tilde{\phi}(2^{-j}|\xi|)\chi(\xi)$. Since $\tilde{\phi}(2^{-j}|D|)\chi(D)$ is bounded on $L^{p}(\Rn)$, we have
\begin{align}\label{eee3.12}
\big\|\sup_{t\in[1,2]}|{\mathscr A}_tf_{j}|\big\|_{L^{q}(\Rn)}
\leq C\|f_{j}\|_{W^{s,p}(\Rn)}\leq C2^{sj}2^{nj}2^{-\frac{n}{p}j}.
\end{align}
By \eqref{eee3.11} and \eqref{eee3.12}, we conclude
\begin{align*}
C^{-1}\frac{\tilde{c}}{2}2^{\frac{n+1}{2}j}
\leq C2^{sj}2^{nj}2^{-\frac{n}{p}j}.
\end{align*}
Letting $j\rightarrow +\infty$,  we obtain $s\geq\frac{n}{p}-\frac{n-1}{2}$.
\end{proof}

We finally present the endgame in the

\begin{proof}[Proof of   Theorem~\ref{th1.1}]
This is a consequence of Lemmas \ref{prop3.1}, \ref{prop3.4} and \ref{prop3.5}.
\end{proof}

\bigskip

\section{Proof of Theorem~\ref{th1.2}}
\setcounter{equation}{0}


To prove Theorem \ref{th1.2},  we will first give
the $L^p\to L^q$ local smoothing estimates for the wave operator  $e^{it\sqrt{-\Delta}}$, i.e.,     Theorem \ref{thm4.3} for $n=2$   and  Theorem \ref{thm4.4} for  $n>2$ below.

\begin{theorem}\label{thm4.3}
Let $1\leq p\leq q\leq\infty$. Denote $s_2(p,q)$  as follows.
\begin{equation}
s_2(p,q):=
\begin{cases}
\frac{1}{2}+\frac{1}{p}-\frac{3}{q}, &\text{for } q\geq3p', \\
\frac{3}{2p}-\frac{3}{2q}, &\text{for } p'\leq q<3p', \\
\frac{2}{p}-\frac{1}{2}-\frac{1}{q}, &\text{for } q<p',
\end{cases}
\end{equation}
where $p'=p/(p-1)$. If $s>s_2(p,q)$, then we have
\begin{align}\label{e4.2a}
\bigg(\int_{1}^{2}\|e^{it\sqrt{-\Delta}}f\|_{L^{q}(\R^{2})}^{q}dt\bigg)^{1/q}\leq C_{\varepsilon}\|f\|_{W^{s,p}(\R^{2})}.
\end{align}
\end{theorem}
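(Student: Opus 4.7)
The plan is to prove the estimate by reducing to a frequency-localized dyadic bound and then summing. Apply a Littlewood-Paley decomposition $f = \sum_{k\geq 0} P_k f$ with $P_k$ a smooth projection to $|\xi|\sim 2^k$, and set $T_k f(x,t) = e^{it\sqrt{-\Delta}}P_k f(x)$. It suffices to establish that for arbitrarily small $\delta>0$,
\[
\|T_k f\|_{L^q(\R^2\times[1,2])} \;\leq\; C\, 2^{k(s_2(p,q)+\delta)}\|P_k f\|_{L^p(\R^2)},
\]
because the triangle inequality in $L^q$ together with the frequency-localization bound $\|P_k f\|_{L^p}\lesssim 2^{-ks}\|f\|_{W^{s,p}}$ converts this into a geometric sum when $s>s_2(p,q)+\delta$; the low-frequency piece $k=0$ is handled trivially since $e^{it\sqrt{-\Delta}}P_0$ has a smooth bounded kernel.

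To obtain the dyadic bound, I would interpolate among five standard frequency-localized endpoint estimates for $T_k$ on $\R^2\times[1,2]$: (I) $L^2\to L^2$ with exponent $0$ by Plancherel; (II) $L^4\to L^4$ with exponent $\delta$, the sharp two-dimensional local smoothing of Guth--Wang--Zhang \cite{GWZ}; (III) $L^\infty\to L^\infty$ with exponent $1/2$ from stationary phase (Peral--Miyachi); (IV) $L^1\to L^\infty$ with exponent $3/2$, the standard dispersive bound; (V) $L^1\to L^1$ with exponent $1/2$, the dual of Peral--Miyachi.

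The three pieces of the piecewise formula for $s_2(p,q)$ correspond to three triangles in the $(1/p,1/q)$-plane whose vertices are chosen from these endpoints. In Case 1 ($q\geq 3p'$) use (II), (III), (IV) with weights $(4/q,\ 1-3/q-1/p,\ 1/p-1/q)$; in Case 2 ($p'\leq q<3p'$) use (I), (II), (IV) with weights $(3/q+1/p-1,\ 2-2/p-2/q,\ 1/p-1/q)$; in Case 3 ($q<p'$, which forces $p<2$) use (I), (IV), (V) with weights $(2-2/p,\ 1/p-1/q,\ 1/p+1/q-1)$. In each case I would verify (by solving a $3\times 3$ linear system) that the weights are nonnegative precisely under the hypotheses of that case, sum to one, and reproduce the claimed $s_2(p,q)$ up to a controlled multiple of $\delta$ coming from endpoint (II). Riesz--Thorin interpolation then delivers the dyadic bound.

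The main obstacle is endpoint (II), the sharp $L^4$ local smoothing for the two-dimensional wave operator; this is the deep input, but we are entitled to cite \cite{GWZ}. The other four endpoints are classical, and the remaining work is bookkeeping: verifying the three convex-hull representations, controlling the propagation of the $\varepsilon$-loss from (II) through interpolation, and handling the dyadic summation via the gap $s-s_2>\delta$.
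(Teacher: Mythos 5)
Your proposal matches the paper's proof essentially step for step: the same five endpoint estimates (Plancherel $(2,2)$, Guth--Wang--Zhang $(4,4)$, the Seeger--Sogge--Stein/Peral--Miyachi $(1,1)$ and $(\infty,\infty)$ fixed-time bounds, and the $(1,\infty)$ dispersive bound), and the same three-vertex interpolation choices in each of the three regions of $(1/p,1/q)$. The only cosmetic difference is that you phrase the interpolation at the level of frequency-localized dyadic pieces with Riesz--Thorin and then sum, whereas the paper interpolates the Sobolev-norm estimates directly; these are equivalent, and your weight computations are correct.
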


\begin{proof}
Let $\varepsilon$ be any positive number. By the local smoothing estimate of Guth,Wang and Zhang  \cite{GWZ}, we have the following $(4,4)$ estimate:
\begin{align}\label{ee4.1}
\bigg(\int_{1}^{2}\|e^{it\sqrt{-\Delta}}f\|_{L^{4}(\R^{2})}^4\,dt\bigg)^{1/4}\leq C_{\varepsilon}\|f\|_{W^{\varepsilon,4}(\R^{2})}.
\end{align}
It is also known  that the fixed-time estimate of Seeger, Sogge and Stein \cite{SSS} implies the following $(1,1)$ estimate and $(\infty,\infty)$ estimate:
\begin{align}\label{ee4.2}
\|e^{it\sqrt{-\Delta}}f\|_{L^{p}(\R^{2}\times[1,2])}\leq C_{\varepsilon}\|f\|_{W^{\frac{1}{2}+\varepsilon,p}(\R^{2})},
\end{align}
where $p=1$ or $\infty$. Moreover, it follows from \cite[Chapter IX, 6.16]{St1} that  the following $(1,\infty)$ estimate holds
\begin{align}\label{ee4.3}
\sup_{t\in[1,2]}\|e^{it\sqrt{-\Delta}}f\|_{L^\infty(\R^{2})}\leq C\|f\|_{W^{\frac{3}{2}+\varepsilon,1}(\R^{2})}.
\end{align}

 Theorem \ref{thm4.3} can be proved by interpolation and the estimates \eqref{ee4.1}--\eqref{ee4.3}. More precisely, in the case $q\geq3p'$,  \eqref{e4.2a} follows from the interpolation between $(\infty,\infty)$, $(4,4)$ and  $(1,\infty)$ estimates. In the case $p'<q\leq 3p'$,  \eqref{e4.2a} follows from the interpolation between $(4,4)$, $(2,2)$ and  $(1,\infty)$ estimates.  In the case $q<p'$, \eqref{e4.2a} follows from the interpolation between $(2,2)$, $(1,1)$ and  $(1,\infty)$ estimates.
\end{proof}

The following estimate is proven in \cite[Proposition 2.1]{CLL}.
\begin{lemma}\label{lemma4.2}
Let $n\geq3$ and suppose $p_{0}=\frac{2(n^{2}+2n-1)}{(n-1)(n+3)}$, $q_{0}=\frac{2(n^{2}+2n-1)}{(n-1)(n+1)}$, $s_{0}=\frac{(n-1)(n+1)}{2(n^{2}+2n-1)}$. Then
\begin{align*}
\bigg(\int_{1}^{2}\|e^{it\sqrt{-\Delta}}f\|_{L^{q_{0}}(\R^{n})}^{q_{0}}dt\bigg)^{1/q_{0}}\leq C2^{s_{0}k}\|f\|_{L^{p_{0}}(\R^{n})}
\end{align*}
holds for all $k\geq1$ and $f\in \mathcal{S}'$ with $\supp \hat{f}\subseteq \{\xi\in\Rn: \  2^{k-1}\leq |\xi|\leq 2^{k+1}\}$.
\end{lemma}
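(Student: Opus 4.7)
The plan is to establish the off-diagonal $L^{p_{0}}\to L^{q_{0}}$ local smoothing bound by combining the Bourgain--Demeter sharp $\ell^{2}$-decoupling for the truncated light cone with auxiliary Strichartz-type and dispersive endpoints, and then interpolating to reach the target triple $(1/p_{0},1/q_{0},s_{0})$.

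First, I would invoke the Bourgain--Demeter decoupling theorem \cite{BD} at the critical exponent $q_{1}=2(n+1)/(n-1)$, which gives, for $f$ with $\widehat{f}$ supported in the annulus $\{|\xi|\sim 2^{k}\}$,
\[
\Bigl(\int_{1}^{2}\|e^{it\sqrt{-\Delta}}f\|_{L^{q_{1}}(\Rn)}^{q_{1}}\,dt\Bigr)^{1/q_{1}}\leq C_{\epsilon}2^{(1/q_{1}+\epsilon)k}\|f\|_{L^{q_{1}}(\Rn)},
\]
the sharp diagonal local smoothing endpoint up to $\epsilon$-loss. Alongside this, I would record the sharp wave Strichartz bound $\|e^{it\sqrt{-\Delta}}f\|_{L^{q_{1}}([1,2]\times\Rn)}\leq C2^{k/2}\|f\|_{L^{2}(\Rn)}$ and the fixed-time $L^{1}\to L^{\infty}$ dispersive estimate $\|e^{it\sqrt{-\Delta}}f\|_{L^{\infty}(\Rn)}\leq C2^{(n+1)k/2}\|f\|_{L^{1}(\Rn)}$, the latter coming from stationary phase on the truncated cone. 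These give three endpoints in the Riesz diagram together with their $k$-dependent losses.

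Next, I would run a Stein-type complex interpolation among these three endpoints along an analytic family of operators of the form $T_{z}=e^{it\sqrt{-\Delta}}(1-\Delta)^{-z/2}$, choosing the interpolation weights so that the resulting $(1/p,1/q)$-coordinate is precisely $(1/p_{0},1/q_{0})$. The point $B=(1/p_{0},1/q_{0})$ depicted in Figure 1 lies in the convex hull of the three Riesz-diagram endpoints, and the specific algebraic form of $p_{0},q_{0}$ is designed so that the interpolation lands on the scaling-invariant line $s=1/q$ that emanates from the Bourgain--Demeter endpoint, forcing $s_{0}=1/q_{0}$.

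The main obstacle I expect is producing the sharp $s$-loss $s_{0}=1/q_{0}$ rather than a strictly larger convex combination of the three endpoint losses. A naive Riesz--Thorin among the endpoints above typically overshoots in the $s$-variable by a positive amount: for instance, at $n=3$ the three natural weights give a loss $\tfrac{3}{7}$ or $\tfrac{5}{14}$ rather than the sharp $\tfrac{2}{7}$. Recovering the sharp exponent requires absorbing the Bourgain--Demeter $\epsilon$-loss (via, e.g., Tao's $\epsilon$-removal lemma applied to a suitable adjoint-restriction reformulation) and/or replacing one of the endpoint bounds with a sharper off-diagonal Strichartz or bilinear-restriction estimate whose loss aligns precisely with the scaling. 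Orchestrating this interpolation so that it lands on exactly $(p_{0},q_{0},s_{0})$ is the technical heart of \cite[Proposition~2.1]{CLL}.
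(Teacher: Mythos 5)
The paper does not prove this lemma; it simply cites \cite[Proposition 2.1]{CLL}. Your proposal, however, has a concrete geometric flaw that makes the proposed interpolation scheme impossible, independent of the $\epsilon$-removal issues you raise.

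The three endpoints you propose sit at $(1/p,1/q)$-coordinates
$\bigl(\tfrac{n-1}{2(n+1)},\tfrac{n-1}{2(n+1)}\bigr)$ (Bourgain--Demeter diagonal local smoothing),
$\bigl(\tfrac12,\tfrac{n-1}{2(n+1)}\bigr)$ (Strichartz $L^{2}\to L^{2(n+1)/(n-1)}$), and
$(1,0)$ (dispersive $L^{1}\to L^{\infty}$). All three have $1/q\le\tfrac{n-1}{2(n+1)}$. But a direct computation gives
\[
\frac{1}{q_{0}}-\frac{n-1}{2(n+1)}=\frac{(n-1)(n+1)}{2(n^{2}+2n-1)}-\frac{n-1}{2(n+1)}=\frac{n-1}{(n+1)(n^{2}+2n-1)}>0,
\]
so $1/q_{0}>\tfrac{n-1}{2(n+1)}$ for every $n\ge 2$ (for $n=3$, $1/q_{0}=2/7>1/4$). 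Hence $(1/p_{0},1/q_{0})$ lies strictly outside the convex hull of your three endpoints, and no interpolation, analytic or otherwise, can reach it; this is a more basic obstruction than the sharpness-of-$s$ concern you flag. To place the target inside a usable triangle you would need at least one endpoint with $1/q>\tfrac{n-1}{2(n+1)}$ --- e.g.\ the trivial $L^{2}\to L^{2}$ bound with $s=0$, or $L^{1}\to L^{1}$ with $s=\tfrac{n-1}{2}$ --- and even then the resulting regularity loss overshoots $s_{0}=1/q_{0}$; this is precisely why the point $B$ is used as a genuine \emph{input} in the proof of Theorem~\ref{thm4.4} rather than derived by interpolation from the others. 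Finally, your closing sentence defers the entire difficulty to \cite[Proposition~2.1]{CLL}, which is what the paper does as well, so the proposal does not constitute an independent proof of the lemma.
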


Recall that $O, A, B, C, D, E$ are given in Figure 1 in   Section 1.

\begin{theorem}\label{thm4.4}
Suppose $n\geq3$. Let $1\leq p\leq q\leq\infty$. Denote $s_{n}(p,q)$  as follows.
\begin{equation*}
s_{n}(p,q):=
\begin{cases}
\frac{1}{p}-\frac{n+1}{q}+\frac{n-1}{2}, &\text{for } (\frac{1}{p},\frac{1}{q})\in \Delta AOE\cup\Delta ABE,\\
\frac{n+1}{2}(\frac{1}{p}-\frac{1}{q}), &\text{for } (\frac{1}{p},\frac{1}{q})\in \Delta BCE,\\
\frac{1}{2p}-\frac{n}{2q}+\frac{n-1}{4}, &\text{for } (\frac{1}{p},\frac{1}{q})\in \Delta ABC,\\
\frac{n}{p}-\frac{1}{q}-\frac{n-1}{2}, &\text{for } (\frac{1}{p},\frac{1}{q})\in \Delta CDE.
\end{cases}
\end{equation*}
If $s>s_{n}(p,q)$, we have
\begin{align}\label{e4.5a}
\bigg(\int_{1}^{2}\|e^{it\sqrt{-\Delta}}f\|_{L^{q}(\R^{n})}^{q}dt\bigg)^{1/q}\leq C_{s}\|f\|_{W^{s,p}(\R^{n})}.
\end{align}
\end{theorem}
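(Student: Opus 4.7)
The plan is to reduce Theorem \ref{thm4.4} to a frequency-localized estimate at each of the vertices of Figure~1, and then to interpolate. After the Littlewood--Paley decomposition $f=P_0 f+\sum_{k\geq 1} P_k f$, it suffices to establish the dyadic bound
\begin{equation}\label{e4.frqq}
\big\|e^{it\sqrt{-\Delta}} P_k f\big\|_{L^q_{t,x}([1,2]\times\R^n)}\leq C\, 2^{k\, s_n(p,q)}\|P_k f\|_{L^p(\R^n)},\qquad k\geq 1,
\end{equation}
for every $(1/p,1/q)\in \triangle ODE$, together with the (trivial) low-frequency bound for $P_0 f$. Indeed, the elementary multiplier estimate $\|P_k f\|_{L^p}\lesssim 2^{-ks}\|f\|_{W^{s,p}}$, valid for all $1\leq p\leq\infty$, combined with the triangle inequality yields \eqref{e4.5a} as soon as $s>s_n(p,q)$, since $\sum_{k\geq 0}2^{k(s_n(p,q)-s)}<\infty$.

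A direct computation shows that $s_n(p,q)$ is piecewise affine in $(1/p,1/q)$, that its four defining formulas agree on common edges, and that its values at $O,A,B,C,D,E$ are respectively $(n-1)/2$, $(n-1)/(2(n+1))$, $s_0$, $0$, $(n-1)/2$, $(n+1)/2$. It therefore suffices to verify \eqref{e4.frqq} at these six vertices and then interpolate within each of the five subtriangles $\triangle AOE$, $\triangle ABE$, $\triangle ABC$, $\triangle BCE$, $\triangle CDE$ that tessellate $\triangle ODE$. The vertex $C$ follows from Plancherel in $x$ and integration in $t$; the vertices $O$ and $D$ follow from the Seeger--Sogge--Stein fixed-time bound $\|e^{it\sqrt{-\Delta}} P_k f\|_{L^p}\lesssim 2^{k(n-1)/2}\|P_k f\|_{L^p}$ at $p=\infty$ and $p=1$, integrated trivially in $t$; the vertex $E$ follows from the dispersive bound $\|e^{it\sqrt{-\Delta}} P_k f\|_{L^\infty}\lesssim 2^{k(n+1)/2}\|P_k f\|_{L^1}$ obtained by stationary phase on the kernel of $e^{it\sqrt{-\Delta}}P_k$ at unit time; the vertex $A$ is the sharp $L^p$ local smoothing at $p=2(n+1)/(n-1)$, a consequence of the Bourgain--Demeter decoupling theorem \cite{BD} via the reduction in \cite{W}; and the vertex $B$ is precisely Lemma \ref{lemma4.2}. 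Any $\varepsilon$ loss at the vertex level is absorbed by the strict inequality $s>s_n(p,q)$.

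With the six vertex estimates in hand, for each of the five subtriangles I would apply two successive Riesz--Thorin interpolations between its three frequency-localized vertex bounds, keeping the scaling factor $2^{k\,s_n(p,q)}$ explicit: because $s_n(p,q)$ is affine on each subtriangle, the interpolated regularity exponent is automatically correct. The main obstacle is not the interpolation itself, which is routine, but the assembly of the two nontrivial vertex inputs: the estimate at $A$ requires the full strength of the Bourgain--Demeter decoupling machinery, while the estimate at $B$ rests on the specialized bilinear/square-function analysis of \cite{CLL} encoded in Lemma \ref{lemma4.2}. The tessellation of $\triangle ODE$ into five subtriangles is chosen precisely so that each of the four local formulas defining $s_n(p,q)$ arises as the affine interpolant of three of the six available vertex estimates.
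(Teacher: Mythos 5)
Your proof is correct and follows essentially the same route as the paper: the same six vertex estimates (Bourgain--Demeter decoupling at $A$, Lemma~\ref{lemma4.2} at $B$, Plancherel at $C$, the Seeger--Sogge--Stein fixed-time bounds at $O$ and $D$, and the $(1,\infty)$ dispersive bound at $E$), followed by interpolation over the same tessellation of $\triangle ODE$. Your dyadic, frequency-localized formulation is in fact somewhat more careful than the paper's brief ``interpolate the Sobolev estimates'' sketch, since reducing to Riesz--Thorin on each Littlewood--Paley block and summing under the strict inequality $s>s_n(p,q)$ cleanly avoids the delicacy of complex interpolation of $W^{s,p}$ spaces when an endpoint has $p=1$ or $p=\infty$.
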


\begin{proof}
Let $\varepsilon$ be any positive number. By applying the decoupling estimate of Bourgain and Demeter \cite{BD}, we have that  for all $p\geq \frac{2n+2}{n-1}$,
\begin{align*}
\bigg(\int_{1}^{2}\|e^{it\sqrt{-\Delta}}f\|_{L^{p}(\R^{n})}\bigg)^{1/p}\leq C_{\varepsilon}\|f\|_{W^{\frac{n-1}{2}-\frac{n}{p}+\varepsilon,p}(\R^{n})}.
\end{align*}
It follows from the fixed-time estimate of Seeger-Sogge-Stein \cite{SSS}  that
\begin{align*}
\|e^{it\sqrt{-\Delta}}f\|_{L^{p}(\R^{n}\times[1,2])}\leq C_{\varepsilon}\|f\|_{W^{\frac{n-1}{2}+\varepsilon,p}(\R^{n})},
\end{align*}
where $p=1$ or $\infty$. By \cite[Chapter IX, 6.16]{St1}, we have the following $(1,\infty)$ estimate:
\begin{align*}
\sup_{t\in[1,2]}\|e^{it\sqrt{-\Delta}}f\|_{L^\infty(\Rn)}\leq C\|f\|_{W^{\frac{n+1}{2}+\varepsilon,1}(\Rn)}.
\end{align*}
By Lemma \ref{lemma4.2}, it is not difficult to obtain the following $(p_{0},q_{0})$ estimate:
\begin{align*}
\bigg(\int_{1}^{2}\|e^{it\sqrt{-\Delta}}f\|_{L^{q_{0}}(\R^{n})}^{q_{0}}dt\bigg)^{1/q_{0}}\leq C\|f\|_{W^{s_0+\varepsilon, p_0}(\R^{n})}.
\end{align*}

With these estimates at our disposal, Theorem \ref{thm4.4} can be proved by interpolation.  More precisely,  in the case $(\frac{1}{p},\frac{1}{q})\in \Delta AOE\cup\Delta ABE$,
 \eqref{e4.5a} follows from the interpolation between $(\infty,\infty)$, $(\frac{2n+2}{n-1},\frac{2n+2}{n-1})$, $(p_{0},q_{0})$ and  $(1,\infty)$ estimates.  In the case $(\frac{1}{p},\frac{1}{q})\in \Delta BCE$, \eqref{e4.5a} follows from the interpolation between $(p_{0},q_{0})$, $(2,2)$ and  $(1,\infty)$ estimates. In the case $(\frac{1}{p},\frac{1}{q})\in \Delta ABC$, \eqref{e4.5a} follows from the interpolation between $(2,2)$, $(p_{0},q_{0})$ and  $(\frac{2n+2}{n-1},\frac{2n+2}{n-1})$ estimates. In the case $(\frac{1}{p},\frac{1}{q})\in \Delta CDE$, \eqref{e4.5a} follows from the interpolation between $(2,2)$, $(1,1)$ and  $(1,\infty)$ estimates.
\end{proof}

\begin{lemma}\label{lem4.5}
For all $1\leq q\leq\infty$, we have
\begin{align*}
\big\|\sup_{t\in[1,2]}|g(\cdot,t)|\big\|_{L^{q}(\Rn)}\leq  C\|g\|_{L^{q}(\Rn\times[1/2,2])}+ C \|g\|_{L^{q}(\Rn\times[1/2,2])}^{1-\frac{1}{q}}\|\partial_{t}g\|_{L^{q}(\Rn\times[1/2,2])}^{\frac{1}{q}}.
\end{align*}
\end{lemma}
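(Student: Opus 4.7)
The plan is to argue by the fundamental theorem of calculus in the $t$-variable, following a standard Sobolev-trace argument. I handle $1\le q<\infty$ first and treat $q=\infty$ separately at the end.

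Fix $x\in\R^n$. For $s\in[1/2,1]$ and $t\in[1,2]$, since $|g|^{q}$ is (almost everywhere) differentiable in $t$ with
\[
\partial_{\tau}|g(x,\tau)|^{q}=q\,|g(x,\tau)|^{q-2}\,\mathrm{Re}\bigl(\overline{g(x,\tau)}\,\partial_{\tau}g(x,\tau)\bigr),
\]
the fundamental theorem of calculus gives
\[
|g(x,t)|^{q}\le |g(x,s)|^{q}+q\int_{1/2}^{2}|g(x,\tau)|^{q-1}\,|\partial_{\tau}g(x,\tau)|\,d\tau.
\]
Taking the supremum over $t\in[1,2]$ on the left and then averaging the right-hand side over $s\in[1/2,1]$ (the left side is independent of $s$) yields
\[
\sup_{t\in[1,2]}|g(x,t)|^{q}\le 2\int_{1/2}^{1}|g(x,s)|^{q}\,ds+q\int_{1/2}^{2}|g(x,\tau)|^{q-1}|\partial_{\tau}g(x,\tau)|\,d\tau.
\]

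Next, I integrate in $x\in\R^n$. For the second term, Hölder's inequality in the joint variable $(x,\tau)\in\R^n\times[1/2,2]$ with exponents $q/(q-1)$ and $q$ produces
\[
\int_{\R^n}\int_{1/2}^{2}|g|^{q-1}|\partial_{\tau}g|\,d\tau\,dx \le \|g\|_{L^{q}(\R^n\times[1/2,2])}^{q-1}\,\|\partial_{t}g\|_{L^{q}(\R^n\times[1/2,2])}.
\]
Combining these estimates,
\[
\bigl\|\sup_{t\in[1,2]}|g(\cdot,t)|\bigr\|_{L^{q}(\R^n)}^{q}\le 2\|g\|_{L^{q}(\R^n\times[1/2,2])}^{q}+q\,\|g\|_{L^{q}(\R^n\times[1/2,2])}^{q-1}\,\|\partial_{t}g\|_{L^{q}(\R^n\times[1/2,2])}.
\]
Taking the $q$-th root and using the subadditivity $(a+b)^{1/q}\le a^{1/q}+b^{1/q}$, valid for $q\ge 1$, gives the claimed inequality with $(q-1)/q=1-1/q$ in the exponent of $\|g\|_{L^q}$ and $1/q$ in the exponent of $\|\partial_t g\|_{L^q}$.

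Finally, for $q=\infty$ the inequality is immediate: $\sup_{t\in[1,2]}|g(x,t)|\le\|g\|_{L^{\infty}(\R^n\times[1/2,2])}$ pointwise, so the first term alone suffices (the second term is interpreted as $\|g\|_{L^\infty}^{1}\|\partial_t g\|_{L^\infty}^{0}$). There is no real obstacle in the argument; the only small technical points are the choice of the averaging interval $[1/2,1]$ for $s$ (so that the trivial inclusion $[1/2,1]\subset[1/2,2]$ makes the first term involve the full $L^q(\R^n\times[1/2,2])$ norm) and the sub-additivity $r\mapsto r^{1/q}$ step, both of which are routine.
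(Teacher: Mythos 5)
Your proof is correct and rests on the same underlying mechanism as the paper's (a Sobolev-trace argument in the $t$-variable). The only difference is one of packaging: the paper multiplies by a smooth cutoff $\rho\in C_c^\infty(1/2,4)$ with $\rho\equiv1$ on $[1,2]$ and invokes Sogge's Lemma 2.4.2 on $\rho g$, then expands $\partial_t(\rho g)=\rho'g+\rho\,\partial_tg$ to land on the stated bound, whereas you re-derive the content of that lemma from scratch via the fundamental theorem of calculus (averaging the base point $s$ over $[1/2,1]$) plus Hölder in $(x,\tau)$ and subadditivity of $r\mapsto r^{1/q}$; both routes give the same estimate, yours being self-contained at the cost of a few extra lines.
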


\begin{proof}
Choose $\rho\in C^{\infty}_{c}(1/2,4)$ satisfy $\rho(t)=1$ if $t\in[1,2]$. Then by \cite[Lemma 2.4.2]{S1}, we have
\begin{align*}
\big\|\sup_{t\in[1,2]}|g(x,t)|\big\|_{L^{q}(\Rn)}&\leq \big\|\sup_{t\in[1/2,2]}|\rho(t)g(x,t)|\big\|_{L^{q}(\Rn)}\\
&\leq  C \big\|\rho(t)g(x,t)\big\|_{L^{q}(\Rn\times[1/2,2])}^{1-\frac{1}{q}}\big\|\partial_{t}(\rho(t)g(x,t))\big\|_{L^{q}(\Rn\times[1/2,2])}^{\frac{1}{q}}\\
&\leq    C \big\|g(x,t)\big\|_{L^{q}(\Rn\times[1/2,2])}+ \big\|g(x,t)\big\|_{L^{q}(\Rn\times[1/2,2])}^{1-\frac{1}{q}}\big\|\partial_{t}g(x,t)\big\|_{L^{q}(\Rn\times[1/2,2])}^{\frac{1}{q}}.
\end{align*}
\end{proof}

\begin{proposition}\label{prop4.5}
	Let $n\geq 2 $ and  $1\leq p\leq q\leq\infty$.   If the local smoothing estimate
	\begin{align}\label{e4.1}
		\big\| e^{it\sqrt{-\Delta}}f \big\|_{L^q(\Rn\times [1,2])} \leq C_{n,p,q} \|f\|_{W^{s, p}(\Rn)}
	\end{align}
	holds for some   $s\in\R$,   then   we have
	\begin{align}\label{e4.2}
		\big\|\sup_{t\in[1,2]}|  {\frak M}_t^{\alpha}f |\big\|_{L^q(\Rn)}\leq C_{n,p,q,\alpha} \|f\|_{L^p(\Rn)}
	\end{align}
whenever
	${\rm Re}\, \alpha > s -\frac{n-1}{2} +\frac{1}{q}$.
\end{proposition}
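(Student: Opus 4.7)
The plan is to reduce the maximal estimate to a dyadic-frequency analysis of the principal oscillatory piece of $\mathfrak M_t^\alpha$, then apply the local smoothing hypothesis \eqref{e4.1} together with Lemma~\ref{lem4.5} on each dyadic block, and finally sum a geometric series whose convergence exactly encodes the condition ${\rm Re}\,\alpha>s-\tfrac{n-1}{2}+\tfrac{1}{q}$.

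First, I would invoke the decomposition \eqref{e2.4} of $\widehat{m_\alpha}(t\xi)$ together with Lemmas~\ref{lem2.1} and \ref{le2.2}: the pieces $\varphi(t|\xi|)\widehat{m_\alpha}(t\xi)+\mathcal{E}(t|\xi|)$ and the error pieces $e^{\pm 2\pi it|\xi|}\mathcal{E}_{N,\ell}(t|\xi|)$ (with $N$ chosen large in terms of ${\rm Re}\,\alpha$, $p$, $q$) already satisfy the desired $L^p\to L^q$ maximal bound. Thus it suffices to treat the principal term, namely the operator with multiplier
\[
t^{-\frac{n-1}{2}-\alpha}|\xi|^{-\frac{n-1}{2}-\alpha}\bigl[e^{2\pi it|\xi|}a_1(t|\xi|)+e^{-2\pi it|\xi|}a_2(t|\xi|)\bigr],
\]
where $a_1,a_2$ are classical zero-order symbols supported in $|\xi|\geq M$.

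Next, take a Littlewood--Paley decomposition $\psi_j(|\xi|)$ with $|\xi|\sim 2^j$ (for $j\geq 1$; the low-frequency contribution is already covered by the $\varphi$-piece) and call $T_t^j f$ the corresponding dyadic block of the principal term. On the support of $\psi_j(|\xi|)$, the multiplier is, uniformly in $t\in[1/2,2]$, a symbol of the form $2^{-(\frac{n-1}{2}+{\rm Re}\,\alpha)j}$ times a bounded zero-order symbol times $e^{\pm 2\pi it|\xi|}$. Hence I can write
\[
T_t^j f = 2^{-(\frac{n-1}{2}+{\rm Re}\,\alpha)j}\,e^{\pm 2\pi it\sqrt{-\Delta}}\,m_j(t,D)f
\]
with $m_j(t,\cdot)$ a family of bounded multipliers (supported at frequency $\sim 2^j$) that is uniformly $L^p$-bounded in $t\in[1/2,2]$. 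Applying the hypothesis \eqref{e4.1} to $e^{it\sqrt{-\Delta}}$ acting on $m_j(t,D)f$ and exploiting frequency localization (so that the $W^{s,p}$-norm on frequency $\sim 2^j$ is comparable to $2^{sj}$ times the $L^p$-norm), I obtain
\[
\|T_t^j f\|_{L^q(\R^n\times[1/2,2])}\lesssim 2^{(s-\frac{n-1}{2}-{\rm Re}\,\alpha)j}\|f\|_{L^p(\R^n)}.
\]
Differentiating in $t$ pulls down a factor of $|\xi|\sim 2^j$ from the exponential (derivatives hitting $a_i(t|\xi|)$ or $t^{-\frac{n-1}{2}-\alpha}$ contribute lower-order terms), so the same argument yields
\[
\|\partial_t T_t^j f\|_{L^q(\R^n\times[1/2,2])}\lesssim 2^{(s-\frac{n-1}{2}-{\rm Re}\,\alpha+1)j}\|f\|_{L^p(\R^n)}.
\]

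Finally, Lemma~\ref{lem4.5} applied to $g=T_t^j f$ converts these space--time bounds into a sup-in-$t$ bound at the cost of the interpolation factor $\|g\|^{1-1/q}\|\partial_t g\|^{1/q}$, which gives
\[
\Bigl\|\sup_{t\in[1,2]}|T_t^j f|\Bigr\|_{L^q(\R^n)}\lesssim 2^{(s-\frac{n-1}{2}-{\rm Re}\,\alpha+\frac{1}{q})j}\|f\|_{L^p(\R^n)}.
\]
Summing over $j\geq 1$ yields a convergent geometric series precisely when ${\rm Re}\,\alpha>s-\tfrac{n-1}{2}+\tfrac{1}{q}$, which proves \eqref{e4.2}. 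The main technical obstacle I expect is the bookkeeping for the $t$-dependent symbol $a_i(t|\xi|)$: one must verify that the family $\{m_j(t,\cdot)\}_{t\in[1/2,2]}$ is indeed a uniformly bounded family of multipliers and that differentiating in $t$ produces only the expected $2^j$-loss plus uniformly bounded lower-order terms; this is routine symbol calculus given the bounds on $a_i$ from \eqref{ee2.1}--\eqref{e2.8a} but must be done carefully so that the local smoothing estimate \eqref{e4.1} can be applied cleanly to a single half-wave operator.
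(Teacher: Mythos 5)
Your proposal is correct and follows essentially the same path as the paper's proof: reduce via \eqref{e2.4} and Lemmas~\ref{lem2.1}--\ref{le2.2} to the principal oscillatory piece, decompose dyadically, extract the factor $2^{-(\frac{n-1}{2}+{\rm Re}\,\alpha)j}$, apply the local smoothing hypothesis together with a bound on $\partial_t$, and conclude via Lemma~\ref{lem4.5} and a geometric sum. The technical point you flag (the $t$-dependence of the symbol) is handled in the paper exactly as you anticipate, by observing that $\psi_j(t|\cdot|)$ is a uniformly bounded zero-order symbol, inserting a fixed cutoff $\tilde\psi_j$, and commuting the $t$-dependent multiplier past $e^{\pm it\sqrt{-\Delta}}$ before invoking \eqref{e4.1}.
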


\begin{proof}
Let  $\varphi$ and $\{\psi_j\}_{j\geq1}$ be functions  in \eqref{eee3.1}. We write
\begin{align}\label{e4.3}
	\widehat{{\frak M}_t^\alpha f}(\xi)
	&= \varphi(t|\xi|)\widehat{m_{\alpha}}(t\xi)\hat{f}(\xi)+\sum_{j\ge 1}\psi_j(t|\xi|)\widehat{m_{\alpha}}(t\xi)\hat{f}(\xi)              \notag \\
	&=:\widehat{{\frak M}_{0,t}^\alpha f}(\xi)+\sum_{j\ge 1}\widehat{{\frak M}_{j,t}^\alpha f}(\xi).
\end{align}
	By applying \eqref{e4.3} and Lemma \ref{lem2.1}, in order to prove \eqref{e4.2}, it suffices to  prove that for some $\delta>0$, there holds
	\begin{align}\label{e4.16}
		\big\|\sup_{t\in[1,2]}|  {\frak M}_{j,t}^{\alpha}f |\big\|_{L^q(\Rn)}\leq C2^{-\delta j} \|f\|_{L^p(\Rn)}
	\end{align}
	whenever
	${\rm Re}\, \alpha > s -\frac{n-1}{2} +\frac{1}{q}$.

	By using \eqref{e2.4},   together with  Lemmas \ref{lem2.1} and \ref{le2.2},   we reduce \eqref{e4.16}  to
	\begin{align}\label{e4.5}
		\big\|\sup_{t\in [1,2]}|  \mathscr{A}_{j,t}f |\big\|_{L^q(\Rn)}\leq C2^{(s+\frac{1}{q})j} \|f\|_{L^p(\Rn)},
	\end{align}
	where $\widehat{\mathscr{A}_{j,t}f}(\xi)=\psi_j(t|\xi|)\widehat{\mathscr{A}_{t}f}(\xi)$ and $\mathscr{A}_t f$ is defined in \eqref{e2.10}. By \eqref{ee2.1}, we can write
	\begin{align*}
		{\mathscr A}_{j,t} f(x)= C\sum_{\ell=0}^{N-1}\int_{\Rn} \left( b_{\ell} e^{2\pi i(x\cdot\xi+ t|\xi|)} +  d_{\ell} e^{2\pi i(x\cdot\xi- t|\xi|)}  \right)|t\xi|^{-\ell}\psi_j(t|\xi|) \hat{f}(\xi) \,\text{d}\xi,
	\end{align*}
	which is a linear combination of
	$$
	T_{\ell,j}f(x,t):=\int_{\Rn}e^{2\pi i(x\cdot\xi\pm t|\xi|)}|t\xi|^{-\ell}\psi_j(t|\xi|)\hat{f}(\xi)\,\text{d}\xi,\ \ \ \ell=0,1,\cdots,N-1.
	$$
	Hence, the proof of  \eqref{e4.5}   reduces to  showing that
	\begin{align}\label{e4.11}
		\big\|\sup_{t\in [1,2]}|  T_{0,j}f(\cdot,t) |\big\|_{L^q(\Rn)}\leq C2^{(s+\frac{1}{q})j}  \|f\|_{L^p(\Rn)}, \ \ \ j\geq 1.
	\end{align}
	
	We observe that for any $1\leq t\leq 2$ and $j\ge 1$, there holds
	$$
	\big|\partial_\xi^\beta \big(\psi_j(t|\xi|)\big)\big|\leq C(1+|\xi|)^{-|\beta|},
	$$
	where $\beta$ is any multi-index. So $\psi_j(t|\cdot|)\in S^0$ uniformly $1\leq t\leq 2$ and $j\ge 1$, hence
	\begin{align}\label{e4.7}	
		\int_{\Rn}\bigg| \int_{\Rn} e^{2\pi i(x\cdot\xi\pm t|\xi|)}\psi_j(t|\xi|)\hat{f}(\xi)\,\text{d}\xi \bigg|^p\,\text{d}x\leq C	 \int_{\Rn}\bigg| \int_{\Rn} e^{2\pi i(x\cdot\xi\pm t|\xi|)}\tilde{\psi}_j(\xi)\hat{f}(\xi)\,\text{d}\xi \bigg|^p\,\text{d}x,
	\end{align}
	where constant $C$ is independent of $t$ and $j$. Here  $\tilde{\psi}_j$ equals to 1 if $|\xi|\in [2^{j-2}M,2^{j+1}M]$ and vanishes if $|\xi|\notin [2^{j-3}M,2^{j+2}M]$, so that $\tilde{\psi}_j$ equals to 1  on the support of $\psi_j(t|\cdot|)$ when $1\leq t\leq 2$. By applying the assumption \eqref{e4.1}  to  \eqref{e4.7}, we have
	\begin{align}\label{eee4.1}
		\|  T_{0,j}f \|_{L^q(\Rn\times [1/2,2])}\leq C 2^{s j}\|f\|_{L^p(\Rn)}.
	\end{align}
	By the same token, the operator
	$$
	\partial_t T_{0,j}f(x,t) = \int_{\Rn}e^{2\pi i(x\cdot\xi\pm t|\xi|)}\big(\pm 2\pi i|\xi|\psi_j(t|\xi|)+|\xi|(\psi_j)^\prime(t|\xi|)\big)\hat{f}(\xi)\,\text{d}\xi.
	$$
	satisfies
	\begin{align}\label{eee4.2}
		\|  \partial_tT_{0,j}f \|_{L^q(\Rn\times [1/2,2])}\leq C 2^{(s+1)j}\|f\|_{L^p(\Rn)}.
	\end{align}
With \eqref{eee4.1} and \eqref{eee4.2} at our disposal,
 \eqref{e4.11} follows from  Lemma \ref{lem4.5}.
\end{proof}

\bigskip

\begin{proof}[Proof of Theorem~\ref{th1.2}]  By noting that $\sigma_2(p,q)=s_2(p,q)-\frac{1}{2}+\frac{1}{q}$  and $d_n(p,q)=s_n(p,q)-\frac{n-1}{2}+\frac{1}{q}$ for $n>2$,  Theorem~\ref{th1.2}  is a direct consequence of Proposition \ref{prop4.5} and
 Theorems \ref{thm4.3}, \ref{thm4.4}.
\end{proof}

\medskip

\noindent
\begin{remark}
In  the dimension $n\geq 3$    Gao {\it et al.} \cite{GLMX} obtained improved local smoothing estimates for the wave equation, that is,
\begin{align*}
		\big\| e^{it\sqrt{-\Delta}}f \big\|_{L^p(\Rn\times [1,2])} \leq C_{n,p} \|f\|_{W^{s, p}(\Rn)}
	\end{align*}
holds with $s= (n-1)({1/2}-1/p)-\sigma$ for all $\sigma<2/p-1/2$  when
\begin{eqnarray*}
	p>
	\left\{
	\begin{array}{lll}
		{2(3n+5)\over 3n+1}, \ \ \  {\rm for}& n \ {\rm odd};  \\[4pt]
		{2(3n+6)\over 3n+2}, \ \ \  {\rm for}& n \ {\rm even}.
	\end{array}
	\right.
\end{eqnarray*}
Applying  Proposition~\ref{prop4.5}, we can improve (ii) of Theorem \ref{th1.2}.
However, the range of $\alpha$ is not optimal.
What happens when $n\geq 3$ remains open.
\end{remark}

\smallskip

\noindent
{\bf Acknowledgments.}
The authors   were supported  by National Key R$\&$D Program of China 2022YFA1005700.
 N.J. Liu was supported by   China Postdoctoral Science Foundation (No. 2024M763732).  L. Song was  supported by  NNSF of China (No. 12471097).


\medskip

\bibliographystyle{plain}

\end{document}